\documentclass{amsart}

\usepackage{enumerate}
\usepackage[dvipsnames]{xcolor}
\usepackage{hyperref}
\hypersetup{
    colorlinks=true,
    linkcolor=blue,
    filecolor=blue,      
    urlcolor=blue,
    citecolor=blue,
}
\usepackage{amsmath,amssymb,amsrefs}
\usepackage{tikz}
\usetikzlibrary{positioning, cd, arrows}
\usepackage{enumerate}
\usepackage{bm}
\usepackage{float}
\usepackage{wrapfig}
\usepackage{caption}
\usepackage{subcaption}
\usepackage{mathrsfs}

\captionsetup[subfigure]{labelfont=rm}

\usepackage[many]{tcolorbox}
\tcbuselibrary{listings}

\usetikzlibrary{graphs}
\usetikzlibrary{graphs.standard}

\newlength\inwd
\setlength\inwd{1.3cm}
\newcounter{notebookcounter}

\newfloat{lstfloat}{htbp}{lop}
\floatname{lstfloat}{Listing}
\numberwithin{lstfloat}{section}

\colorlet{inputcolor}{blue!50!black}
\colorlet{outputcolor}{red!50!black}

\lstdefinelanguage{genericnotebookin}{
    morekeywords={print, def, return},
    morecomment=[l]{//},
    basicstyle=\footnotesize\ttfamily,
    keywordstyle=\color{green!40!black}\bfseries,
    commentstyle=\color{purple!40!black}\itshape,
}

\lstdefinelanguage{genericnotebookout}{
    morekeywords={:,Type},
    morecomment=[l]{//},
    basicstyle=\footnotesize\ttfamily,
    keywordstyle=\bfseries\color{green!40!black},
    commentstyle=\itshape\color{purple!40!black},
}

\newtcblisting{notebookin}[1][\thenotebookcounter]{
  enlarge left by=\inwd,
  width=\linewidth-\inwd,
  enhanced,
  boxrule=0.4pt,
  colback=ForestGreen!15,
  listing only,
  top=0pt,
  bottom=0pt,
  overlay={
    \node[
      anchor=north east,
      text width=\inwd,
      font=\footnotesize\ttfamily\color{inputcolor},
      inner ysep=2mm,
      inner xsep=0pt,
      outer sep=0pt
      ] 
      at (frame.north west)
      {\stepcounter{notebookcounter}In [#1]:};
  }
  listing engine=listing,
  listing options={
    aboveskip=1pt,
    belowskip=1pt,
    language=genericnotebookin,
    showstringspaces=false,
    breaklines=true,
  },
}
\newtcblisting{notebookout}[1][\thenotebookcounter]{
  enlarge left by=\inwd,
  width=\linewidth-\inwd,
  enhanced,
  boxrule=0.1pt,colback=White,
  listing only,
  top=0pt,
  bottom=0pt,
  overlay={
    \node[
      anchor=north east,
      text width=\inwd,
      font=\footnotesize\ttfamily\color{outputcolor},
      inner ysep=2mm,
      inner xsep=0pt,
      outer sep=0pt
      ] 
      at (frame.north west)
      {Out [#1]:};
  }
  listing engine=listing,
  listing options={
    aboveskip=1pt,
    belowskip=1pt,
    language=genericnotebookout,
    showstringspaces=false,
  },
}

\tikzset{Rightarrow/.style={double equal sign distance,>={Implies},->},
triple/.style={-,preaction={draw,Rightarrow}},
quadruple/.style={preaction={draw,Rightarrow,shorten >=0pt},shorten >=1pt,-,double,double
distance=0.2pt}}

\floatstyle{ruled}
\newfloat{typefloat}{htbp}{lop}
\floatname{typefloat}{Type}

\usepackage[draft]{pdfcomment}
\usepackage{xparse}
    \DeclareDocumentCommand \EDITmargin { o m } {
        \IfNoValueTF {#1} {
            \pdfmargincomment[icon=Note]{#2}
        }{
            \pdfmargincomment[icon=NOte,author=#1]{#2}
        }
    }
    \DeclareDocumentCommand \EDITcomment { o m } {
        \IfNoValueTF {#1} {
            \pdfcomment[color=Blue!20]{#2}
        }{
            \pdfcomment[color=Blue!20,author=#1]{#2}
        }
    }
    \DeclareDocumentCommand \EDITalt { o m m } {
        \IfNoValueTF {#1} {
            \pdfmarkupcomment[markup=StrikeOut]{#2}{#3}
        }{
            \pdfmarkupcomment[markup=StrikeOut,icon=key, author=#1]{#2}{#3}
        }
    }
    \DeclareDocumentCommand \EDITtypo { o m o } {
        \IfNoValueTF {#3} {
            \pdfmarkupcomment[markup=Squiggly,author=#1]{#2}{}
        }{
            \pdfmarkupcomment[markup=Squiggly,author=#1]{#2}{#3}
        }
    }
    \DeclareDocumentCommand \EDIThighlight { o m m } {
\pdfmarkupcomment[markup=Highlight,author=#1,color=blue!20]{#2}{#3}
}

\DeclareMathOperator{\GL}{GL}

\DeclareMathOperator{\id}{id}

\newcommand{\N}{\mathbb{N}}
\newcommand{\Z}{\mathbb{Z}}

\tikzcdset{
  cells={font=\everymath\expandafter{\the\everymath\displaystyle}},
}
\makeatletter
\newcommand{\oset}[3][0ex]{\mathrel{\mathop{#3}\limits^{
    \vbox to#1{\kern-2\ex@
    \hbox{$\scriptstyle#2$}\vss}}}}
\makeatother

\newcommand{\Epi}[1]{\oset{\twoheadrightarrow}{\cat{#1}}}\newcommand{\epi}[1]{\Epi{#1}}
\newcommand{\Mono}[1]{\oset{\hookrightarrow}{\cat{#1}}}
\newcommand{\mono}[1]{\Mono{#1}}

\newcommand{\Cat}[1]{\mathtt{#1}}
\newcommand{\cat}[1]{\Cat{#1}}

\renewcommand{\leq}{\leqslant}
\renewcommand{\geq}{\geqslant}
\renewcommand{\preceq}{\preccurlyeq}

\newcommand{\inputref}[2]{\hyperref[#1]{\texttt{{\color{inputcolor}In\,[#2]}}}}
\newcommand{\outputref}[2]{\hyperref[#1]{\texttt{{\color{outputcolor}Out\,[#2]}}}}

\makeatletter
\newcommand{\xRrightarrow}[2][]{\ext@arrow 0359\Rrightarrowfill@{#1}{#2}}
\newcommand{\Rrightarrowfill@}{\arrowfill@\equiv\equiv\Rrightarrow}
\newcommand{\xLleftarrow}[2][]{\ext@arrow 3095\Lleftarrowfill@{#1}{#2}}
\newcommand{\Lleftarrowfill@}{\arrowfill@\Lleftarrow\equiv\equiv}
\makeatother

 \usepackage{enumitem}

\newtheorem{thm}{Theorem}
\newtheorem{prop}[thm]{Proposition}

\newtheorem{coro}[thm]{Corollary}

\theoremstyle{definition}
\newtheorem{defn}[thm]{Definition}
\newtheorem{ex}[thm]{Example}

\theoremstyle{remark}
\newtheorem{rem}[thm]{Remark}
\newtheorem*{rem*}{Remark}

\DeclareDocumentCommand \I { o o } {
    \IfNoValueTF {#1} {
        \IfNoValueTF {#2} {
            \mathtt{I}
        }{
            \mathtt{I}_{#2}
        }
    }{
        \IfNoValueTF {#2} {
            \mathtt{I}(#1)
        }{
            \mathtt{I}_{#2}(#1)
        }
    }
}

\DeclareDocumentCommand \J { o o } {
    \IfNoValueTF {#1} {
        \IfNoValueTF {#2} {
            \mathtt{J}
        }{
            \mathtt{J}_{#2}
        }
    }{
        \IfNoValueTF {#2} {
            \mathtt{J}(#1)
        }{
            \mathtt{J}_{#2}(#1)
        }
    }
}
\DeclareDocumentCommand \Ftr { m o o } {
    \IfNoValueTF {#2} {
        \mathbf{#1}
    }{
        \IfNoValueTF{#2} {
            \mathbf{#1}(#3)
        }{
            \mathbf{#1}_{#2}(#3)
        }
    }
}

\newcommand{\LT}[2]{\cat{LT}_{#1}^{#2}}

\title{Isomorphism invariant metrics}
\date{\today}
\author{P. A. Brooksbank}
\address{Department of Mathematics, Bucknell University, Lewisburg, PA 17837, USA}
\email{pbrooksb@bucknell.edu}

\author{J. F. Maglione}
\address{
Faculty of Mathematics, 
Otto von Guericke
University Magdeburg,
39106 Magdeburg, 
Germany}
\email{joshua.maglione@ovgu.de}
\author{E. A. O'Brien}
\address{Department of Mathematics, University of Auckland, Private Bag 92019,
Auckland, New Zealand}
\email{e.obrien@auckland.ac.nz}
\author{J. B. Wilson}
\address{Department of Mathematics, Colorado State University, Fort Collins, Colorado, 80523-1874, USA}
\email{James.Wilson@ColoState.Edu}

\begin{document}

\begin{abstract}
Within a category $\cat{C}$, having objects $\cat{C}_0$, it may be instructive
to know not only that two objects are non-isomorphic, but also how far from
being isomorphic they are. We introduce pseudo-metrics $d:\cat{C}_0 \times
\cat{C}_0 \to [0,\infty]$ with the property that $x\cong y$ implies $d(x,y)=0$.
We also give a canonical construction that associates to each isomorphism
invariant a pseudo-metric satisfying that condition. This guarantees a large
source of isomorphism invariant pseudo-metrics.  We examine such pseudo-metrics
for invariants in various categories.
\end{abstract}

\maketitle

\section{Introduction}
\label{sec:intro}
Isomorphism invariants are widely used as a means of distinguishing, up to a
suitable notion of equivalence, between structures of various types. The
chromatic number of a graph and the order of a group
are familiar examples of isomorphism invariants. More refined examples include 
the sizes of conjugacy classes
of a group and the fundamental group of a topological space. The utility
of an isomorphism invariant depends, by and large, on two essential 
qualities---how discerning it is 
(how effectively it distinguishes nonequivalent structures),
and how easy it is to compute (either by hand or computer). Heretofore,
the utility of an invariant rested primarily on its ability to 
differentiate---if its value differs on given objects $x$ and $y$, then its job
is done.  
We investigate the question of \emph{how different} $x$ and
$y$ are with respect to a given invariant. We show that each isomorphism invariant
on a category possesses additional information: it 
induces on the objects of the category a universal partial order 
(Theorem~\ref{thm:functor}). This in turn gives rise to a notion of distance between
structures (Corollary~\ref{coro:iso-dist}).

\subsection*{Outline}
Section~\ref{sec:prelims} contains the background we need. 
Section~\ref{sec:categorification} proves our central claim that
an isomorphism invariant can be universally enriched to a functor into a
thin category. Section~\ref{sec:restrict} proposes strategies to extract 
information from this enriched structure of isomorphism invariants. 
Finally, Section~\ref{sec:examples} explores our results applied to
isomorphism invariants used in contemporary research.

\section{Preliminaries}
\label{sec:prelims}

An \emph{isomorphism invariant} is a function $\I$ such that
\begin{align}
    \label{def:iso-inv}
    \textrm{objects } x~\mbox{and}~y~\mbox{are isomorphic} \qquad \Longrightarrow\qquad \I[x]=\I[y].
\end{align}
The precise meaning of 
``isomorphic" varies according to context, but 
we shall use it in the general setting of categories. 
Our notion of equality $\I[x]=\I[y]$ 
is somewhat fluid. For example,
comparing two topologies by their fundamental groups 
requires us to decide isomorphism of groups. This 
is technically an isomorphism problem in its own right, 
but we could simply assume a ``black box" in which 
isomorphic groups are identified as equal. A similar situation occurs
for graphs and modules, which are often compared using 
invariants such as automorphisms and endomorphisms, 
respectively. Even for basic invariants such
as the cardinality of sets, deciding equality is not 
always immediate---indeed writing $|X|=|Y|$ is merely 
a statement that a bijection $X\to Y$ exists. 

Recently a field of study emerged to formalize the
concept of ``identify isomorphisms'' in a manner that does not lead to
paradoxes or issues with structures too large to be sets.
Known as \emph{Univalent Foundations}, 
it builds on nearly a century of refinements to 
Set Theory that began with Bishop Sets, progressed to Martin-L\"of Sets, and achieved its 
current expression through ideas 
from homotopy theory. 
Although the goal was to refine 
concepts of sets, the general study today is known as 
\emph{Homotopy Type Theory}.
While its notation is somewhat different, it 
retains most of the familiar aspects of Set Theory.  We include 
a short summary sufficient for our purposes.

\subsection{Types as generalized sets}\label{sec:types-1}
Formally, types are defined using syntax rules in logic
\cite{Hindley-Seldin}*{Chapters~10-13}. Informally, a \emph{type} is a label
that annotates data. Writing $a:A$ means that $a$ is used only in ways
that all data annotated by $A$ can be used. For example, we write $a:
\mathbb{R}$ to signal that $a$ is used only as a real number.  
Writing $a:A$ is analogous to $a\in A$ in Set Theory, but we dispense with the
supporting axioms of sets. Indeed, all sets are types, but types need not be
sets. Writing $A:\mathfrak{U}$ indicates that $A$ is a type in a universe
$\mathfrak{U}$ of types.

New types can be formed from existing types.  Given types $A$ and $B$, there is
a function type $A\to B$. A {\it term} $f$ of type $A\to B$ is denoted by $f:A\to B$.
As the notation suggests, terms $f$ of type $A\to B$ obey the rule that given
$a:A$, there is an associated term $f(a):B$. The data of a function $f$ in Set
Theory is its graph $\{(a,f(a))\mid a\in A\}\subset A\times B$. In Type Theory,
the data $f:A\to B$ may be a program that transforms $a$ to $f(a)$ (by, for
example, a Turing Machine or $\lambda$-calculus, or an axiom about existence),
so $f:A\to B$ makes sense even if $A$ and $B$ are not sets. An
\emph{isomorphism} between types $A$ and $B$, written $A\cong B$, consists of
functions $f:A\to B$ and $g:B\to A$ whose respective compositions $gf$ and $fg$
reduce as programs to the identity functions $\id_A:A\to A$ and $\id_B:B\to B$. 

In Set Theory, for each ``indexed family'' $\{C_i\mid i\in I\}$ of sets, there
is a Cartesian product $\prod_{i\in I}C_i$ whose elements are tuples $c =
(c_i)_{i\in I}$ with each $c_i\in C_i$. Consider its analogue in Type Theory.
First, take $I:\mathfrak{U}$. Next, define a function $C:I\to \mathfrak{U}$ 
that maps $i:I$ to $C_i:\mathfrak{U}$. Like functions, the terms $c$ of the 
product type $\prod_{i:I}C_i$ take $i:I$ and assign a term  $c_i:C_i$. 

Types are mild conceptual generalizations of na\"ive sets
that are well suited to computation. Indeed, types are 
an essential feature in the forthcoming article
\cite{BDMOBW} that proposes a new computational model 
for algebraic data types
based on higher categories, and in \cite{BMOBW} to 
develop a divide-and-conquer mechanism for isomorphism 
testing in algebra.

\subsection{Equality}
\label{sec:equality}
Equality in Type Theory differs from na\"ive Set Theory in that it is 
based on how data can be used---it is not a judgement 
about sets containing each other's elements.
The concept is 
a version of the Leibniz Law:
\begin{equation}\label{eqn:Leibniz}
    (a= b) \Longrightarrow (\forall P)(P(a)\Leftrightarrow P(b)).
\end{equation}
To define equality 
between terms $a$ and $b$ of a type $A$, a proposition $P(a)$ is replaced 
by a type $X_a:\mathfrak{U}$, where
terms $p:X_a$ witness the validity of $P(a)$. To illustrate, 
suppose $P(m,n)$ is the statement that ``$m\leq n$ for natural numbers $m,n$".
We define a type 
$m\leq_{\N}n$, 
where $p:(m\leq_{\N}n)$ stores the natural number $k$ such that $m+k=n$.
Instead of ``true/false'', we say that $X_{a}$ is \emph{inhabited}
if there is some data (or ``proof'') $p:X_a$, and \emph{uninhabited} 
(analogous to the empty set) if 
it is known to have no data. For example, 
$5\leq_{\N}3$ exists as a type 
but is uninhabited
because there is no $k:\N$ such that $5+k=3$.  

The implication $P(a)\Rightarrow P(b)$ translates to the function type
$X_a\to X_b$.  If $X_b$ is uninhabited and $f:X_a\to X_b$, then $X_a$ must be
uninhabited (akin to $True\not \Rightarrow False$). Thus,
$P(a)\Longleftrightarrow P(b)$ translates to $(f,g) : (X_a\to X_b)\times
(X_b\to X_a)$. 

The Leibniz Law in~\eqref{eqn:Leibniz} now translates to a new
type to capture equality, written $a=_A b$. The data of type $a=_A b$ transforms
to data of type
\[
    \prod_{X:A\to\mathfrak{U}}(X_a\to X_b)\times (X_b\to X_a),
\] 
where $X:A\to \mathfrak{U}$ is a function mapping $a:A$ to $X_a:\mathfrak{U}$.
The type ``$=_A$" is the Martin-L\"of \emph{identity type}; 
see \cite{Hindley-Seldin}*{Chapter~13} for related technical details. 

\subsection{Orders}
\label{sec:orders}
As with equality, an order relation $a\preccurlyeq b$ on a type $A$ 
determines a new type, denoted $\preccurlyeq_A$, where data 
\(
    \mathrm{rel}_{ab}: (a\preccurlyeq_A b)
\)
is interpreted as a ``proof'' of the proposition $P(a,b):\equiv (a\preccurlyeq b)$. 
To see how this works, 
consider a pre-order $\preccurlyeq$, namely 
an order that is both reflexive and transitive.
Each proposition translates to a type: 
the reflexive proposition $R_{\preccurlyeq}:\equiv((\forall
a)(a\preccurlyeq a))$ translates to the type 
$X_{\preccurlyeq}:=\prod_{a:A}(a\preccurlyeq_A a)$, 
and the transitive proposition 
$T_{\preccurlyeq}:\equiv((\forall a,b,c)(a\preccurlyeq b\preccurlyeq c\Rightarrow a\preccurlyeq c))$
translates to the type
\begin{align}
    \label{eq:trans1}
    Z_{\preccurlyeq}:=\prod_{a,b,c:A}\biggl(
        (b\preccurlyeq_A c)\times (a\preccurlyeq_A b)\to (a\preccurlyeq_A c)
        \biggr).
\end{align}
The combination $X_{\preccurlyeq}\times Z_{\preccurlyeq}$  captures 
both reflexive and transitive conditions as one data type, which we denote suggestively 
as $\preccurlyeq_{A}$.  Again $a\preccurlyeq_A b$ is not an assertion 
that $a\preccurlyeq b$.
Rather, it is a data type that, when inhabited by some evidence $e:a\preccurlyeq_A b$, 
allows us to conclude that $a\preccurlyeq b$.
If $a\preccurlyeq_A b$ is shown to be uninhabited, then we 
conclude $a\not\preccurlyeq b$.

A \emph{partial order} is a pre-order $\leq$ that also satisfies the 
antisymmetry law
$S_{\leq}:\equiv ((\forall a,b)(a\leq b\leq a\Rightarrow a=b))$ with associated type
\begin{align}
    \label{eq:trans}
    Y_{\leq}:=\prod_{a,b:A}\biggl(
        (b\leq_A a)\times (a\leq_A b)\to (a=_{A} b)
        \biggr).
\end{align}

\subsection{Univalence}
\label{sec:univalence}
Following the examples of equality and pre-orders, every relation can be turned into a 
type.  For example, we can define a type $A\cong_{\mathfrak{U}}B$ where 
data of this type captures an isomorphism $A\to B$ as defined in 
Section~\ref{sec:types-1}.  
If the types are equal, then 
we have the Martin-L\"of identity type
$A=_{\mathfrak{U}}B$, which means simply that $A$ and $B$ can be used 
interchangeably in our interpretation of 
the Leibniz law. 
Therefore, evidence $e$ of type $(A=_{\mathfrak{U}} B)$ can be used to define the 
identity function $\id_e:A\to B$ which is now an isomorphism of type 
$(A\cong_{\mathfrak{U}}B)$~\cite{HoTT}*{Lemma~2.10.1}.  Call $e\mapsto \id_e$ the 
\emph{identity map}.
The principle of univalence can now be summarized formally by the following axiom.

\begin{description}[leftmargin=0cm]
\item[Univalence axiom] The identity map $(A=_{\mathfrak{U}} B) \longrightarrow
    (A\cong_{\mathfrak{U}} B)$ is invertible.
\end{description}

\textit{Univalence does not make isomorphism obsolete---}on the contrary,
it says that data may be identified \emph{after} an isomorphism has been
exhibited. A comprehensive treatment 
of univalence may be found in~\cite{HoTT}*{Chapter~2}. 

Univalence is germane to isomorphism research in that it facilitates
reductions.  For example, each homeomorphism of pointed topological spaces
$(A,a_0)\cong (B,b_0)$ induces an isomorphism $\pi_1(A,a_0)\cong \pi_1(B,b_0)$
of fundamental groups. Thus, in a univalent framework,
\begin{align*}
    (A,a_0)\cong (B,b_0) 
        \to \pi_1(A,a_0)=_{\mathfrak{U}}\pi_1(B,b_0).
\end{align*}
Likewise, constructions such as automorphism groups and
cardinality of sets can now be regarded as isomorphism 
invariants in a precise sense.

\subsection{Categories}
Our development of categories adapts standard treatments such as
\cite{Jacobson:AlgebraII}*{Chapter~8} to types and univalence, as discussed in
\cite{HoTT}*{Chapter~9}. A \emph{categorical structure} $\cat{C}$ consists of
\emph{objects} 
\(
    \cat{C}_0:\mathfrak{U},
\)
\emph{morphisms}
\(
    \cat{C}_1:\cat{C}_0\times \cat{C}_0\to \mathfrak{U},
\)
\emph{identity morphisms}
\(
    \id:\prod_{x:\cat{C}_0}\cat{C}_1(x,x)
\), 
and a \emph{composition} 
\[
    *:\prod_{x,y,z:\cat{C}_0}\biggl(\cat{C}_1(y,z)\times \cat{C}_1(x,y)\to \cat{C}_1(x,z)\biggr).
\]
We write $f*g$ simply as $fg$. A \emph{pre-category} is a categorical structure
satisfying
\begin{align*}
    &(\forall f:\cat{C}_1(x,y),\forall g:\cat{C}_1(y,z),\forall h:\cat{C}_1(z,w))
    &
    h (g f) & = (h g) f\\
    &(\forall f:\cat{C}_1(x,y))
    &
    \id_y f & = f = f \id_x.
\end{align*}
A morphism $f:\cat{C}_1(x,y)$ is an \emph{isomorphism} if there exists a
morphism $g:\cat{C}_1(y,x)$ such that $\id_x= gf$ and $\id_y= fg$, in which case
we write $x\cong y$.   

Every pre-order $(X,\preccurlyeq)$ is a pre-category
whose morphisms are the terms of type
$x\preccurlyeq_X y$, with composition taken directly from the transitive 
law, and identity maps from the reflexive law.  To
emphasize this perspective, we denote this pre-category by
$\LT{X}{\preccurlyeq}$ to evoke ``less than".  Note that
$\LT{X}{\preccurlyeq}$ is a \emph{thin} pre-category, 
in the sense that there is at most
one morphism between each pair of objects. In fact, \emph{every} thin
pre-category  can be regarded as a pre-order where $x\preccurlyeq y$ if there
is a morphism from $x$ to $y$.  Thus, pre-orders and thin
pre-categories are identical concepts.

A \emph{functor} $\Ftr{F}:\cat{C}\to\cat{D}$ is a homomorphism of
pre-categories, namely a pair of functions $\Ftr{F}_0:\cat{C}_0\to \cat{D}_0$
and $\Ftr{F}_1:\cat{C}_1\to \cat{D}_1$, where
$\Ftr{F}[1][\id_x]=\id_{\Ftr{F}[0][x]}$ and is either \emph{covariant}, so
$\Ftr{F}[1][gf]=\Ftr{F}[1][g]\Ftr{F}[1][f]$, or \emph{contravariant},
so $\Ftr{F}[1][gf]=\Ftr{F}[1][f]\Ftr{F}[1][g]$. To remove notational clutter,
we write $\Ftr{F}(x)$ for $\Ftr{F}[0][x]$ and $\Ftr{F}(f)$ for $\Ftr{F}[1][f]$ when
the context is clear. An \emph{isomorphism} of two pre-categories is a pair
$(\Ftr{F},\Ftr{G})$ of functors between them 
satisfying $\Ftr{FG}=\id$ and $\Ftr{GF}=\id$.

A \emph{natural transformation} $\nu:\Ftr{F}\Rightarrow \Ftr{G}$ between
functors $\Ftr{F},\Ftr{G}:\cat{C}\to \cat{D}$ comprises, for each object $x$ of
$\cat{C}$, a morphism $\nu_{x}:\Ftr{F}(x)\to \Ftr{G}(x)$ in $\cat{D}$ such that, for
every morphism $f:x\to y$ in $\cat{C}$, the following diagram commutes:
\newcommand{\MyJf}{\Ftr{F}(f)}
\newcommand{\MyKf}{\Ftr{G}(f)}
\begin{equation}
    \begin{tikzcd}
             \Ftr{F}(x)
                \arrow[d,"\MyJf"]
                \arrow[r,"\nu_{x}"] 
                &
             \Ftr{G}(x)\arrow[d,"\MyKf"]\\
        \Ftr{F}(y) \arrow[r,"\nu_{y}"] & \Ftr{G}(y)
    \end{tikzcd}
\end{equation}
An \emph{equivalence} of pre-categories is a pair of functors
$(\Ftr{F},\Ftr{G})$ in which there are natural transformations $\id\Rightarrow
\Ftr{FG}\Rightarrow \id$ and $\id\Rightarrow \Ftr{GF}\Rightarrow \id$.

An important factor in our description of pre-categories $\cat{C}$ on objects 
$\cat{C}_0$ is that there is no role in the axioms for the Martin-L\"of identity 
type $=_{\cat{C}_0}$.  This allows us to introduce by definition the 
equality best suited for use of a category in a univalent context.

\begin{description}
\item[Univalence Condition] 
A \emph{univalent category} $\cat{C}$
is a pre-category that satisfies the following:
 for all $x,y:\cat{C}_0$, 
    there is a function 
    $(x\cong_{\cat{C}_0} y) \to (x=_{\cat{C}_0} y)$.
\end{description}

Familiar categories such as groups, topological spaces, and graphs have been
modelled as categories with types in the sense just
described~\cite{HoTT}*{\S\S~6.1 and 9.1}. 
We stress again that we do not ignore 
isomorphisms in these categories. Rather, if an isomorphism 
is presented between
two objects, then 
univalence 
allows us to identify them as equal.  
The role of these isomorphisms is  
to carry out the interchange of elements as required by the Leibniz Law. 

For instance, given a pre-order $\preccurlyeq$ on $\Delta$, 
its associated pre-category $\LT{\Delta}{\preccurlyeq}$ 
induces a category by deciding that every isomorphism $a\cong b$ 
is evidence 
of type $a=_{\Delta} b$.  Notice $a\cong b$ in $\LT{\Delta}{\preccurlyeq}$
means precisely that $a\preccurlyeq b\preccurlyeq a$.
In other words, under this notion of equality the pre-order 
$\preccurlyeq$ becomes a partial order, which we denote by $\leq$.
This is one illustration of a general fact that every pre-category without the univalence condition can be mapped via a weak equivalence functor to a category 
known as the Rezk
completion  \cite{HoTT}*{\S~9.9}. 

\begin{rem}\label{rem:Rezk}
The situation is akin to inducing an equivalence relation $(a\sim
b)\Leftrightarrow (a\preccurlyeq b\preccurlyeq a)$ on $\Delta$ and inducing from
$\preccurlyeq$ a partial order $\leq$ on the resulting partition
$\Delta/_{\sim}$.  However, the univalent formalism allows us to avoid
partitions and newly induced morphisms between equivalence classes of objects.
\end{rem}

\begin{rem}
In category theory based on other foundations---such as 
von Neumann--Bernays--G\"odel Set-Class Theory, or Grothendieck 
universes---the notions of pre-category and category coincide. Such 
foundations admit notions similar to univalence by taking a 
\emph{skeleton} of the category: using a suitably strong version of the 
Axiom of Choice, we choose a representative from each 
isomorphism type in the category.  
Skeletons essentially resolve isomorphism 
questions by invoking an axiom---hence, for the purpose of isomorphism 
research, we avoid them.
\end{rem}

\section{Metrics from Isomorphism Invariants}
\label{sec:categorification}
Categories provide a suitably general framework within which to formulate the
problems we discuss, and to present the main outcomes of our study.  
We now explain how each isomorphism invariant on a category 
gives rise to a universal partial order, which in turn leads 
to a notion of distance between the objects of the category. 

Fix a univalent category $\cat{C}$ with objects $\cat{C}_0$.
A \emph{pseudo-metric} on $\cat{C}$ is a function
$d:\cat{C}_0\times \cat{C}_0\to [0,\infty]$ such that for all
$x,y,z:\cat{C}_0$ the following 
conditions hold:
\begin{enumerate}\item[(i)] $d(x,x)=0$, 
\item[(ii)] $d(x,y)=d(y,x)$, and
\item[(iii)] $d(x,y)\leq d(x,z)+d(z,y)$.
\end{enumerate}
A \emph{metric} is a pseudo-metric in which $x=y$ if $d(x,y)=0$. Each
pseudo-metric induces a metric simply by identifying points $x,y$ for which
$d(x,y)=0$ and working instead in the quotient metric topology. 
The condition $d(x,y) < \infty$ is
often imposed for metrics, and it can be attained by canonically decomposing
$(\cat{C}_0, d)$ into subspaces carrying only finite 
metrics~\cite[Ch.~1]{BBI:metric-geometry}. 

Conditions (i)--(iii) are standard requirements for any traditional notion of
distance. Condition (i) is 
equivalently written $d(x,y)=0$ if $x=y$, so in our
univalent setting 
\[
    x\cong y \quad \Longrightarrow \quad x=y\quad \Longrightarrow \quad d(x,y)=0.
\]
Thus, a pseudo-metric in our context treats isomorphic objects as having a
distance of $0$.  In this sense, distances greater than $0$ are a measure of
non-isomorphism.  

\subsection{Characteristic metrics of isomorphism invariants}
Isomorphism invariants and pseudo-metrics are closely related. Indeed, each
isomorphism invariant $\I:\cat{C}_0\to \Delta$ from a category $\cat{C}$ into some
type $\Delta$ has a \emph{characteristic} pseudo-metric
$c_{\I}:\cat{C}_0\times\cat{C}_0\to 
[0,\infty]$,
where for objects $x,y$,
\[
    c_{\I}(x,y)  = \begin{cases} 
        0 & \text{ if } \I[x]=\I[y] \\ 1 & \text{ otherwise. } 
    \end{cases}
\]

Conversely, suppose we are given a pseudo-metric 
$d:\cat{C}_0\times \cat{C}_0\to [0,\infty]$.
Let $[0,\infty]^{\cat{C}_0}$ denote functions $\cat{C}_0\to [0,\infty]$.
For each $x:\cat{C}_0$, define $\I_d(x):\cat{C}_0\to [0,\infty]$, where 
\(
    \I_d(x)(z)=d(x,z).
\)
If $x\cong y$ then, for all objects $z$,
\begin{align*}
    \I_d(x)(z) = d(x,z)=d(y,x)+d(x,z)\geq d(y,z)=\I_d(y)(z).
\end{align*}
Likewise, 
\(
    \I_d(y)(z) = d(x,y)+d(y,z)\geq d(x,z)=\I_d(x)(z),
\)
so $\I_d(x)=\I_d(y)$ in $[0,\infty]^{\cat{C}_0}$. 
Hence, $\I_d:\cat{C}_0\to [0,\infty]^{\cat{C}_0}$ sending
$x\mapsto \I_d(x)$ is an isomorphism invariant.

If $\I:\cat{C}_0\to \Delta$ is an isomorphism invariant, and $x$ is an
object, then 
\begin{align*}
\I_{c_{\I}}(x)(y)= \begin{cases}
    0 & \text{ if } \I[x]=\I[y] \\ 1 & \text{ otherwise.}
    \end{cases}
\end{align*}
so $\I[x]$ can be recovered from $\I_{c_{\I}}(x)$. 
On the other hand, if
$d:\cat{C}_0\times\cat{C}_0\to [0,\infty]$ is a pseudo-metric, 
then $\I_d(x) = \I_d(y)$ implies $d(x,z)=d(y,z)$ for each object $z$---in
particular, if $z=y$, then $d(x,y)=d(y,y)=0$. Hence 
\begin{align*}
    c_{\I_d}(x,y)= \begin{cases} 
        0 & \text{ if } d(x,y)=0 \\ 1 & \text{ otherwise} 
    \end{cases}
\end{align*}
and the value of $d(x,y)$ is lost. Thus, $\I\mapsto c_{\I}$ and $d\mapsto \I_d$
are only partial inverses.

In summary, the characteristic metric possesses no more information than 
the isomorphism invariant. However, the main result in the 
next section shows that each isomorphism invariant has a more discerning 
metric from which useful information can often be extracted.

\subsection{Main Theorem}
The following result asserts that isomorphism invariants correspond to functors
into thin categories. The proof uses the notation for orders 
introduced in Section~\ref{sec:orders}.

\begin{thm}\label{thm:functor}
    Let $\cat{C}$ be a category. Let $\I:\cat{C}_0\to
    \Delta$ be a surjective isomorphism invariant from the objects of $\cat{C}$ 
    onto a type $\Delta$. 
There is a partial order
    $\leq$ on $\Delta$ and a functor $\Ftr{J}:\cat{C}\to \LT{\Delta}{\leq}$
    such that $\Ftr{J}[][x]=\I[x]$ for each $x : \cat{C}_0$. Furthermore,
    $(\LT{\Delta}{\leq},\Ftr{J})$ is universal in the following sense:
    \smallskip

    \begin{minipage}{0.9\textwidth}
    \noindent For every thin category $\cat{R}$ with objects $\Delta$,
    and every functor $\Ftr{K}:\cat{C}\to \cat{R}$, where
    $\Ftr{K}[][x]=\I[x]$ for each $x : \cat{C}_0$, there is a unique functor 
    $\Ftr{L}:\LT{\Delta}{\leq} \to \cat{R}$ 
    and a natural transformation from $\Ftr{L} \Ftr{J}$ to $\Ftr{K}$.
    \end{minipage}
\end{thm}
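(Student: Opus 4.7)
The plan is to define $\leq$ on $\Delta$ as the minimal pre-order compatible with $\I$, use univalence to upgrade it to a partial order, and then read off the universal property from the thinness of $\LT{\Delta}{\leq}$.

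First I would define a relation on $\Delta$ as the reflexive--transitive closure of $\{(\I[x], \I[y]) : \cat{C}_1(x,y) \text{ is inhabited}\}$. Reflexivity uses the surjectivity hypothesis on $\I$: for each $a:\Delta$ pick $x$ with $\I[x]=a$ and use $\id_x$. Transitivity is built into the closure. Antisymmetry is the delicate point: if $a\leq b$ and $b\leq a$, then $a\cong b$ in the thin pre-category $\LT{\Delta}{\leq}$, and the univalence condition (exactly as in the paragraph preceding Remark~\ref{rem:Rezk}) forces $a=_\Delta b$. Thus $\leq$ is a partial order and $\LT{\Delta}{\leq}$ is a univalent thin category. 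I would then define $\Ftr{J}$ on objects by $\Ftr{J}(x):=\I[x]$ and send $f:\cat{C}_1(x,y)$ to the unique morphism in $(\LT{\Delta}{\leq})_1(\I[x],\I[y])$, which exists by construction of $\leq$. Functoriality is immediate because parallel morphisms in a thin category coincide.

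For universality, let $\cat{R}$ be a thin category with objects $\Delta$ and let $\Ftr{K}:\cat{C}\to\cat{R}$ satisfy $\Ftr{K}(x)=\I[x]$. The hom-types of $\cat{R}$ encode a partial order $\leq_{\cat{R}}$ on $\Delta$ by the univalence condition applied to $\cat{R}$. Functoriality of $\Ftr{K}$ forces $\I[x]\leq_{\cat{R}}\I[y]$ whenever $\cat{C}_1(x,y)$ is inhabited, and then reflexivity and transitivity of $\leq_{\cat{R}}$ combine with the minimality of $\leq$ to yield $a\leq b\Rightarrow a\leq_{\cat{R}} b$. Hence $\Ftr{L}:\LT{\Delta}{\leq}\to\cat{R}$, taken as the identity on objects and as the unique extension on morphisms, is a well-defined functor, uniquely determined by thinness of $\cat{R}$. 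The identity components $\nu_x:=\id_{\I[x]}$ assemble into the required natural transformation $\Ftr{L}\Ftr{J}\Rightarrow\Ftr{K}$, with naturality squares commuting automatically.

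The principal obstacle is the antisymmetry step: the set-theoretic reflexive--transitive closure only yields a pre-order, and it is the univalence condition on $\LT{\Delta}{\leq}$ that promotes it to a genuine partial order by identifying $a$ and $b$ whenever they are mutually bounded. Once this identification is invoked, the remaining verifications---functoriality of $\Ftr{J}$, existence and uniqueness of $\Ftr{L}$, and the naturality of $\nu$---are routine consequences of thinness of the target categories.
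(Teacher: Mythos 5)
Your proposal is correct and follows essentially the same route as the paper's proof: the pre-order generated by $\{(\I[x],\I[y])\,:\,\cat{C}_1(x,y)\text{ inhabited}\}$ is the reflexive--transitive closure you describe, reflexivity comes from surjectivity via $\id_x$, the functor $\Ftr{J}$ sends each morphism to the unique witness of the corresponding relation, and the universal property follows from thinness of $\cat{R}$ plus minimality of the generated order. The only cosmetic difference is that the paper works throughout with the pre-order $\preccurlyeq$ (spelling out the composite $\Ftr{K}(f_n)\cdots\Ftr{K}(f_1)$ in $\cat{R}$ explicitly) and invokes the passage to the partial order $\leq$ via the univalence/Rezk-completion step only at the very end, whereas you invoke univalence up front to state the partial order directly and then appeal to minimality rather than an explicit composite; both are equivalent.
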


Recall 
that equality between objects in a category is determined by isomorphisms 
in the category.
In particular, equality between terms of type $\Delta$ in
Theorem~\ref{thm:functor} is determined by isomorphisms in
$\LT{\Delta}{\leq}$.  

\begin{proof}
    For terms $\delta,\epsilon$ of $\Delta$, write
    $\delta\preccurlyeq\epsilon$ if there is a sequence $(f_i:x_i\to y_i \mid
    1\leqslant i\leqslant n)$ of morphisms in $\cat{C}$ such that
    $\I[y_i]=\I[x_{i+1}]$ for each $1\leqslant i \leq n-1$, with
    $\delta=\I[x_1]$ and $\epsilon=\I[y_n]$. 
    
    Fix $\delta:\Delta$. As
    $\I$ is surjective, there is an object $x$ of $C$ with $\I[x]=\delta$. 
Hence $\delta\preceq\delta$ using the sequence $(\id_x)$, so
    $\preceq$ is reflexive. Also, if $\delta\preccurlyeq \epsilon$ via $(f_i:x_i\to
    y_i \mid 1\leqslant i\leqslant n)$ and $\epsilon\preccurlyeq \phi$ via
    $(g_i:z_i\to w_i \mid 1\leqslant i\leqslant m)$, then $\delta=\I[x_1]$,
    $\I[y_n]=\epsilon=\I[z_1]$, and $\I[w_m]=\phi$. Hence, the sequence $(f_1,
    \ldots, f_n, g_1, \ldots, g_m)$ shows that $\epsilon\preccurlyeq\phi$, so
    $\preceq$ is transitive. Thus, $\preccurlyeq$ is a pre-order.

    Next we define a functor $\Ftr{J}:\cat{C}\to \LT{\Delta}{\preccurlyeq}$.
    For each $x:\cat{C}_0$, we require $\Ftr{J}(x)=\I[x]$. If $f:x\to
    y$ is a morphism in $\cat{C}$ then, by definition, $\I[x]\preccurlyeq \I[y]$
    so we define $\Ftr{J}(f)=\mathrm{rel}_{\I[x]\,\I[y]}$. In particular,
    $\Ftr{J}(\id_x)=\mathrm{rel}_{\I[x]\,\I[x]}$. If $f:x\to y$ and $g:y\to z$
    are morphisms, then 
    \[\Ftr{J}(g)\Ftr{J}(f)=
    (\mathrm{rel}_{\I[y]\,\I[z]})(\mathrm{rel}_{\I[x]\,\I[y]}) =
    \mathrm{rel}_{\I[x]\,\I[z]}=\Ftr{J}(gf),
    \] 
    so $\Ftr{J}$ is a functor.
    
    It remains to establish the stated universal property. Suppose 
    $\Ftr{K}:\cat{C}\to \cat{R}$ is a functor, where $\cat{R}$ is a thin
    pre-category with objects $\Delta$, and $\Ftr{K}$ agrees with $\I$ on objects
    in $\cat{C}$. We define a functor $\Ftr{L}: \LT{\Delta}{\preccurlyeq}\to
    \cat{R}$ behaving as claimed. For each object $\delta$ of
    $\LT{\Delta}{\preccurlyeq}$, put $\Ftr{L}(\delta)=\delta$. Let
    $\mathrm{rel}_{\delta\epsilon}$ be a morphism of
    $\LT{\Delta}{\preccurlyeq}$, so $\delta\preccurlyeq \epsilon$ via some
    sequence $(f_i:x_i\to y_i \mid 1\leqslant i\leqslant n)$. Since $f_i$ is a
    morphism in $\cat{C}$ and $\Ftr{K}$ is a functor, $\Ftr{K}(f_i) : \I[x_i]
    \to \I[y_i]$ for each $i$. As $\I[y_i] = \I[x_{i+1}]$ for each $1\leq i\leq
    n-1$, the morphisms $\Ftr{K}(f_i)$ are composable, and $\Ftr{K}(f_n)\cdots
    \Ftr{K}(f_1):\delta\to \epsilon$. As $\cat{R}$ is thin, this is the unique
    morphism in $\cat{R}$ with domain $\delta$ and codomain $\epsilon$, so we
    define $\Ftr{L}(\mathrm{rel}_{\delta\epsilon}) = \Ftr{K}(f_n)\cdots
    \Ftr{K}(f_1)$. 
Hence $\Ftr{L}$ is a functor
    $\LT{\Delta}{\preccurlyeq}\to \cat{R}$, so $\Ftr{K}(x)=\Ftr{LJ}(x)$ and
    $\Ftr{K}(f)=\Ftr{LJ}(f)$, as required.

    Finally, as in Remark~\ref{rem:Rezk},
    let $\Ftr{J}':\LT{\Delta}{\preccurlyeq}\to \LT{\Delta}{\leq}$ be the functor
    taking the pre-category of the pre-order $\preccurlyeq$ 
    to the associated category with partial order $\leq$. If $\cat{R}$ is a category,
    then the above construction factors through $\Ftr{J}'$. This completes the
    proof.
\end{proof}

\subsection{From orders to distances}
We now show that the partial order associated with $\I$ in
Theorem~\ref{thm:functor} leads to a more discerning metric. Our treatment of
metrics on partially ordered sets follows~\cite{Monjardet:metric-posets}.

Let $\leq$ be a partial order for a type $P$.  If $p\leq q$ and $p\neq q$, then $p<q$. 
If $p<r\leq q$ implies $q=r$, then $q$ \emph{covers} $p$. 
The \emph{Hasse
diagram} of $(P,\leq)$ is the directed graph with vertices $P$ and edges
\(
    \mathcal{E}  = \{ (p,q) \mid q \text{ covers }p\}.
\)
The \emph{cover graph} $\mathcal{C}(P)$ is the undirected graph generated by the
Hasse diagram. 
Define the \emph{graph metric} $d_{\leq} : P \times P \to [0, \infty]$ such that
$d_{\leq}(p,q)$ is either the minimal length of all finite paths in
$\mathcal{C}(P)$ between $p$ and $q$ or infinite if none exist.

\begin{coro}\label{coro:iso-dist} 
Let $\cat{C}$ be a category. Let $\I:\cat{C}_0\to
    \Delta$ be a surjective isomorphism invariant from the objects of $\cat{C}$ 
    onto a type $\Delta$. There exists a partial order $\leq$ on
    $\Delta$, and $d_{\I} : \cat{C}_0\times \cat{C}_0 \to [0,\infty]$ given by $(x,y)\mapsto d_{\leq}(\I[x], \I[y])$ is a pseudo-metric.
\end{coro}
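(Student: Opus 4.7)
The plan is to invoke Theorem~\ref{thm:functor} to obtain the partial order $\leq$ on $\Delta$ and then verify that pulling back the graph metric of $(\Delta, \leq)$ along $\I$ gives a pseudo-metric on $\cat{C}_0$. So, first I would apply Theorem~\ref{thm:functor} to the surjective isomorphism invariant $\I:\cat{C}_0 \to \Delta$ to produce a partial order $\leq$ on $\Delta$ (together with the functor $\Ftr{J}:\cat{C}\to \LT{\Delta}{\leq}$, though only the order $\leq$ is needed here). With $\leq$ in hand, form the Hasse diagram and the cover graph $\mathcal{C}(\Delta)$, and let $d_{\leq}:\Delta\times\Delta\to[0,\infty]$ be the graph metric as defined just before the corollary.

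Next, I would verify that $d_{\leq}$ itself is a pseudo-metric on $\Delta$. Condition (i), $d_{\leq}(\delta,\delta)=0$, holds because the empty path from $\delta$ to itself has length $0$. Condition (ii) is immediate since $\mathcal{C}(\Delta)$ is undirected, so every path from $\delta$ to $\epsilon$ reverses to one from $\epsilon$ to $\delta$ of the same length. Condition (iii) follows from concatenation of paths: a shortest path from $\delta$ to $\phi$ is no longer than a shortest from $\delta$ to $\eta$ followed by a shortest from $\eta$ to $\phi$, and if either summand is $\infty$ the inequality is vacuous. This is the standard argument that shortest-path length is a pseudo-metric on the vertex set of any undirected graph, applied to $\mathcal{C}(\Delta)$.

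Finally, set $d_{\I}(x,y) = d_{\leq}(\I[x], \I[y])$ and transfer the three properties through $\I$. Property (i): since $\I$ is a function, $\I[x]=\I[x]$, so $d_{\I}(x,x)=d_{\leq}(\I[x], \I[x])=0$. Property (ii) follows at once from symmetry of $d_{\leq}$. Property (iii) follows from the triangle inequality for $d_{\leq}$ applied to $\I[x]$, $\I[y]$, $\I[z]$:
\[
d_{\I}(x,y) = d_{\leq}(\I[x], \I[y]) \leq d_{\leq}(\I[x], \I[z]) + d_{\leq}(\I[z], \I[y]) = d_{\I}(x,z) + d_{\I}(z,y).
\]
I do not anticipate a genuine obstacle; the corollary is essentially the observation that the pullback of a pseudo-metric along any function is again a pseudo-metric, with all of the categorical content absorbed into the existence of $\leq$ provided by Theorem~\ref{thm:functor}. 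The one place that merits a sentence of care is checking that the univalent equality $\I[x]=\I[y]$ forced by $x\cong y$ is compatible with the statement $d_{\I}(x,y)=0$, but this is immediate from property (i) together with the congruence of $d_{\leq}$ in its arguments.
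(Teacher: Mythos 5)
Your proof is correct and follows exactly the route the paper intends (the paper states the corollary without a written proof, treating it as immediate from Theorem~\ref{thm:functor} and the preceding definition of the graph metric $d_{\leq}$). You correctly fill in the two standard facts the paper leaves implicit: that shortest-path distance on the undirected cover graph $\mathcal{C}(\Delta)$ is a pseudo-metric (reflexivity via the empty path, symmetry since the graph is undirected, triangle inequality via concatenation, with $\infty$ handled as a vacuous case), and that the pullback of a pseudo-metric along any function is a pseudo-metric. The closing observation that $x\cong y$ forces $\I[x]=\I[y]$ and hence $d_{\I}(x,y)=0$ is exactly the compatibility with univalence the paper emphasizes after its definition of pseudo-metric.
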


\section{A User's Guide}
\label{sec:restrict}
The constructions in
Theorem~\ref{thm:functor} and Corollary~\ref{coro:iso-dist} 
are general---they may be applied to any 
isomorphism invariant on any category.
However, doing 
so indiscriminately can lead to underwhelming results.  
To extract meaningful information, we must often impose 
restrictions on the categories, such as by restricting to 
either the epimorphisms or the monomorphisms
in the category.  This is analogous to contemporary 
strategies in isomorphism testing that study objects as 
quotients of larger structures. In this section 
we reveal various 
impediments to using 
our isomorphism invariant metrics, and discuss some ways 
to overcome them.

\subsection{Cycles}
Consider the 
category $\cat{Vect}$ of finite-dimensional vector spaces, 
whose morphisms are linear maps. 
The dimension function $\dim:\cat{Vect}_0\to \N$
is an isomorphism invariant. 
By Theorem~\ref{thm:functor}, there is an associated 
partial order $\sqsubseteq$ on $\N$.  
However, this is not the usual successor order we might expect. 
If $U$ and $V$ are vector spaces, then there are linear maps 
$U\to 0\to V$. Applying the functor $\Ftr{J}$  
into $\LT{\N}{\sqsubseteq}$
shows that
\(
    \dim U \sqsubseteq 0\sqsubseteq \dim V,
\)
so in this category all objects are isomorphic.  Since $\LT{\N}{\sqsubseteq}$ is univalent, all objects
in this category are equal.  In other words,  as in Remark~\ref{rem:Rezk},
$(\N, \sqsubseteq)$ is
defined on the trivial partition of $\N/_{\sim}$ where all natural
numbers are equal. 
This is a common situation in categories rich with morphisms, and it
arises due the following phenomenon. 

\begin{defn}
    A \emph{cycle} in a category $\cat{C}$ is a sequence $(f_0, \dots, f_{n-1})$ of
    morphisms $f_i : x_i \to x_{i+1}$ for all $i\in \Z/n\Z$, where $n\geq 2$. 
    A cycle is \emph{trivial} if $x_0\cong\cdots\cong x_{n-1}$.
\end{defn}
    
For example, $0\to \mathbb{Z}\to 0$ is a cycle in the category of abelian groups 
but $\id:\mathbb{Z}\to \mathbb{Z}$ and $\mathbb{Z}\to \mathbb{Z}/2\to 0$ are not.
If $C$ has a cycle, then a functor 
from $C$ 
into a thin 
category is forced to assign all terms in the cycle to a common point.
For non-trivial cycles this may result in unwanted identifications.

\subsection{Restricting morphisms}
\label{sec:break-cycles}
Cycles are often broken by removing some of the morphisms that lead to them.
For example, the cycle $0\to \mathbb{Z}\to 0$ in the category of abelian groups
consists of an injective function followed by a surjective function. Restricting
to just one class of function---the injective ones, say---eliminates one of the
edges, thereby breaking the cycle.

\begin{defn}
    Let $\cat{C}$ be a category. 
    
    A morphism $f$ in $\cat{C}$ 
    is a \emph{monomorphism} if, 
    for all composable morphisms $g,h$ in $\cat{C}$, 
    $fg=fh$ implies $g=h$. 
Let $\mono{C}$ be the subcategory 
    with the same objects as $\cat{C}$ whose morphisms 
    are the monomorphisms in $\cat{C}$.
    
    Similarly, $f$ is an \emph{epimorphism} if 
    $gf=hf$ implies $g=h$. 
Let $\epi{C}$ be the subcategory 
    with the same objects as $\cat{C}$ whose morphisms 
    are the epimorphisms in~$\cat{C}$.
\end{defn}

We apply Theorem~\ref{thm:functor} to 
the invariant $\dim:\;\mono{Vect}_0\to \N$.
Let $\leq$ denote the usual successor order on $\N$:
\[
    m\leq n \qquad \Longleftrightarrow \qquad (\exists k)(m+k=n).
\]

If $f:U\hookrightarrow V$, with $m=\dim U$ and $n=\dim V$,
then $m\leq n$ because every basis for $f(U)$ can 
be extended to a basis for $V$. Thus, 
$\Ftr{J}(f)=\mathsf{rel}_{mn}$
is the unique  morphism of type $m\leq_{\N} n$.
Moreover,  whenever
$m\leq n$, there is a vector space
$U$ with $m=\dim U$, a vector space $V$ with $n=\dim V$, 
and a monomorphism $f:U\to V$, so no relations can be dropped.
Hence, if  $\cat{P}$ is a thin category with $\cat{P}_0=\N$,
and $\Ftr{K}:\;\mono{Vect}\to \cat{P}$ is a functor
such that $\Ftr{K}(V)=\dim V$, then $\Ftr{K}(f)$ is 
the unique morphism of $\cat{P}$ with domain 
$m$ and codomain $n$. 
Thus, $\LT{\N}{\leq}$ maps into $\cat{P}$ via 
the unique function $\Ftr{J}(f)=\mathsf{rel}_{mn}\mapsto \Ftr{K}(f)$.
Finally, the pseudo-metric 
induced by the dimension invariant is, from Corollary~\ref{coro:iso-dist},
\[
    d_{\dim}(U,V) = |\dim U-\dim V|.
\]

\subsection{Cantor--Schr\"oder--Bernstein categories.}
\label{sec:CSB}
We cannot always break cycles just by restricting morphisms. 
Consider, for example, the free group $F_n$ on $n$ letters.  
For every $n>2$, by just using words in two of the $n$ variables,
we get a monomorphism $F_2\hookrightarrow F_n$.
A result of Dehn shows that 
there is also a monomorphism $F_n\hookrightarrow F_2$, so 
there are non-trivial cycles 
$F_2\hookrightarrow F_n\hookrightarrow F_2$
in the category of groups
whose morphisms are monomorphisms.
For {\it concrete} categories 
(those with a faithful functor to $\cat{Set}$)
this can only occur for infinite sets.
This suggests that we impose ``set-like'' conditions on our 
category to avoid cycles.

\begin{defn}
    A category $\cat{C}$ is a 
    \emph{monic Cantor--Schr\"oder--Bernstein (CSB) category} 
    if its monomorphisms satisfy the following condition:
    \[
        \mbox{if}~f: x \hookrightarrow y~\mbox{and}~g : y\hookrightarrow x,~ 
        \mbox{then}~x\cong y. 
    \]
    Epic CSB categories satisfy an analogous condition on 
    epimorphisms:
    \[
        \mbox{if}~f: x \twoheadrightarrow  y~\mbox{and}~g : y\twoheadrightarrow  x,~ 
        \mbox{then}~x\cong y. 
    \]
\end{defn}
   
Many familiar concrete categories---such as finite groups and 
finite graphs---are CSB categories, as are larger categories (subject to
an appropriately strong Axiom of Choice). The following 
elementary result demonstrates the importance of CSB categories 
to Theorem~\ref{thm:functor}.

\begin{prop}
    In a monic CSB category $\cat{C}$, all cycles in $\mono{C}$ are trivial. 
    In an epic CSB category $\cat{C}$, all cycles in $\epi{C}$ are trivial. 
\end{prop}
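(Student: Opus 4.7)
The plan is to use the closure of monomorphisms (respectively epimorphisms) under composition, together with the CSB hypothesis, to produce mutual monomorphisms between any pair of objects in a cycle.

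First, I would record the standard fact that a composition of monomorphisms is a monomorphism: if $f$ and $g$ are monic and $fgh_1 = fgh_2$, then $gh_1 = gh_2$ by cancelling $f$, and then $h_1 = h_2$ by cancelling $g$. Dually, compositions of epimorphisms are epimorphisms. This is the only nontrivial piece of structure needed, and it is routine.

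Next, suppose $\cat{C}$ is a monic CSB category and let $(f_0,\dots,f_{n-1})$ be a cycle in $\mono{C}$ with $f_i:x_i\hookrightarrow x_{i+1}$ for $i:\Z/n\Z$. For any indices $i$ and $j$ with $0\leq i,j\leq n-1$, travelling around the cycle starting at $x_i$ produces a composite monomorphism $x_i\hookrightarrow x_j$, and travelling the other way (or continuing around the cycle, which closes up because the indices live in $\Z/n\Z$) produces a composite monomorphism $x_j\hookrightarrow x_i$. Invoking the monic CSB property then yields $x_i\cong x_j$, and since this holds for every pair $(i,j)$, the cycle is trivial.

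For an epic CSB category $\cat{C}$ and a cycle $(f_0,\dots,f_{n-1})$ in $\epi{C}$, the argument is dual: compositions around the cycle produce epimorphisms $x_i\twoheadrightarrow x_j$ and $x_j\twoheadrightarrow x_i$ for every $i,j$, and the epic CSB hypothesis gives $x_i\cong x_j$.

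There is no real obstacle; the only care needed is to observe that the cyclic indexing guarantees composable paths in both directions between any two vertices, so that the CSB hypothesis actually applies to each pair.
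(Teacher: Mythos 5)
Your proposal is correct and takes essentially the same approach as the paper: compose monomorphisms around the cycle to obtain mutual monomorphisms between objects, then invoke the CSB hypothesis. The only cosmetic difference is that you apply CSB directly to every pair $(x_i,x_j)$ by going around the cycle in the quotient $\Z/n\Z$, whereas the paper establishes $x_0\cong x_{n-1}$ and then appeals to an induction; your version is arguably slightly more explicit since it avoids the unspecified inductive step.
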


\begin{proof} 
    Consider a cycle $(f_0, \dots, f_{n-1})$ of morphisms in $\mono{C}$ 
    for a monic CSB category $\cat{C}$.
    Then $f_{n-2}\cdots f_1f_0 : x_0\to x_{n-1}$ and $f_{n-1} : x_{n-1} \to
    x_0$ are both monomorphisms. By the CSB property, this implies that 
    $x_{n-1}\cong x_0$. The result now follows by induction. The proof 
    for epic CSB categories is almost identical. 
\end{proof}

\subsection{Virtual cycles}
There are other potential obstacles to defining useful distances and not all of
them can be removed by restricting morphisms. Returning to fundamental groups,
consider the following continuous surjective maps of pointed topological spaces:
\begin{center}
    \begin{tikzcd}
        (\mathbb{R}^1,0) \arrow[r,"e^{i\theta}"]
        & (S^1,1) \arrow[r]
        & (\mathrm{pt},\mathrm{pt}) .
    \end{tikzcd}
\end{center}
Applying $\pi_1$, we obtain the group homomorphisms $1\to \mathbb{Z}\to 1$. Hence,
$1\leq \mathbb{Z}\leq 1$ and it follows that $1=\mathbb{Z}$.  
This phenomenon occurs with every universal covering space. 

\begin{defn}
    Let $\cat{C}$ be a category, and let $\I$ be an isomorphism invariant 
    on $\cat{C}$.
    A \emph{virtual $\I$-cycle} is a sequence $(f_0, \dots, f_{n-1})$ of
    morphisms $f_i : x_i \to y_{i}$ where $n\geq 2$ and $\I[x_{i+1}]=\I[y_i]$ 
    for all $i\in \Z/n\Z$. 
    A virtual $\I$-cycle is \emph{trivial} if $\I[x_0]=\cdots=\I[x_{n-1}]$.
\end{defn}

Taking the fundamental group as the invariant,
$\pi_1(\mathbb{R}^1,0)=1=\pi_1(\mathrm{pt},\mathrm{pt})$ in our earlier
illustration, whereas $\pi_1(S^1,1)\not\cong 1$. Thus, we have a nontrivial
virtual cycle. Sometimes these can be disrupted.

\begin{prop}\label{prop:new-iso-inv}
    Suppose that $\cat{C}$ and $\cat{D}$ are 
categories with a functor $\Ftr{G}:\cat{C}\to
    \cat{D}$. If $\I_1$ and $\I_2$ are isomorphism invariants on $\cat{C}$ and
    $\cat{D}$ respectively, then 
    \[
        (\I_1\cup\I_2)(x) := (\I_1(x),\I_2(\Ftr{G}(x)))
    \] 
    is an isomorphism invariant of $\cat{C}$.  
\end{prop}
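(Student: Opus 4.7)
The plan is to verify the defining property~\eqref{def:iso-inv} of an isomorphism invariant for $\I_1 \cup \I_2$ directly, by unpacking the definition of the pair and exploiting the two hypotheses in turn. So I assume that $x, y : \cat{C}_0$ with $x \cong y$, and aim to conclude $(\I_1 \cup \I_2)(x) = (\I_1 \cup \I_2)(y)$ as terms of the product type $\Delta_1 \times \Delta_2$, where $\Delta_i$ is the target of $\I_i$.

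First I would handle the two coordinates independently. The first coordinate is immediate: because $\I_1$ is an isomorphism invariant on $\cat{C}$ and $x \cong y$ in $\cat{C}$, we get $\I_1(x) = \I_1(y)$. For the second coordinate, the key step is the standard observation that functors preserve isomorphisms: given an isomorphism $(f, g)$ witnessing $x \cong y$ in $\cat{C}$, functoriality of $\Ftr{G}$ yields $\Ftr{G}(f)\Ftr{G}(g) = \Ftr{G}(fg) = \Ftr{G}(\id_y) = \id_{\Ftr{G}(y)}$, and similarly in the other direction, so $(\Ftr{G}(f), \Ftr{G}(g))$ witnesses $\Ftr{G}(x) \cong \Ftr{G}(y)$ in $\cat{D}$. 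Applying the hypothesis that $\I_2$ is an isomorphism invariant on $\cat{D}$ then gives $\I_2(\Ftr{G}(x)) = \I_2(\Ftr{G}(y))$.

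Finally I would assemble the two equalities into an equality of pairs. In the univalent framing of Section~\ref{sec:equality}, this is precisely the product rule for identity types: given evidence of type $\I_1(x) =_{\Delta_1} \I_1(y)$ and evidence of type $\I_2(\Ftr{G}(x)) =_{\Delta_2} \I_2(\Ftr{G}(y))$, one obtains evidence of type $(\I_1(x), \I_2(\Ftr{G}(x))) =_{\Delta_1 \times \Delta_2} (\I_1(y), \I_2(\Ftr{G}(y)))$. This is the required equality $(\I_1 \cup \I_2)(x) = (\I_1 \cup \I_2)(y)$.

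There is no real obstacle here; the argument is a bookkeeping exercise. The only point deserving emphasis is the use of functoriality to transport the isomorphism $x \cong y$ in $\cat{C}$ across $\Ftr{G}$ to an isomorphism $\Ftr{G}(x) \cong \Ftr{G}(y)$ in $\cat{D}$, since without that step the invariant $\I_2$ cannot be applied in the second coordinate.
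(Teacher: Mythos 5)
Your argument is correct and is exactly the intended one: the paper states the proposition without proof (calling it an ``observation''), and the justification you give—first coordinate by invariance of $\I_1$, second coordinate by transporting the isomorphism along $\Ftr{G}$ (functors preserve isomorphisms) and then applying invariance of $\I_2$, followed by pairing the two equalities—is the standard reading.
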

This observation can be used to break the virtual cycle in our 
fundamental group example. Let both $\cat{C}$ 
and $\cat{D}$ be the category of pointed topological spaces, 
and let $\Ftr{G}$ be the identity functor.  
Let $\I_1(X,x_0)=\pi_1(X,x_0)$ and $\I_2(X,x_0)=\dim X$,
the topological dimension of $X$. The 
invariant $\I_1\cup \I_2$ breaks the virtual cycle:
\begin{align*}
    (\I_1\cup\I_2)(\mathbb{R}^1,0)=(0,1)
    \geq (\I_1\cup \I_2)(S^1,0)=(\mathbb{Z},1)\geq (\I_1\cup\I_2)(\mathrm{pt},\mathrm{pt})=(0,0).
\end{align*}

\begin{defn}
    A category $\cat{C}$ is a \emph{pigeonhole} category if $\cat{C}$ is
    concrete 
and every morphism $f:x\to y$ in $\cat{C}$ satisfying $|x|=|y|$ is an isomorphism. 
\end{defn}
\noindent
Both $\mono{FinSet}$ and $\epi{FinSet}$ are pigeonhole categories.

\begin{prop}
    Let $|\cdot|$ be the cardinality invariant on $\cat{Set}$. If $\I$ is an
    isomorphism invariant on a pigeonhole category $\cat{C}$, then all virtual
    $(\I\cup |\cdot|)$-cycles are trivial.
\end{prop}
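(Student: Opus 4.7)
The plan is to use the pigeonhole condition to force each morphism in a putative virtual cycle to be an isomorphism, after first pinning down the cardinalities of its endpoints. Once every $f_i$ in the cycle is an isomorphism, both coordinates of $\I\cup|\cdot|$ are automatically constant along the cycle, which is precisely triviality.

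First I fix a virtual $(\I\cup|\cdot|)$-cycle $(f_i : x_i \to y_i)_{i\in\Z/n\Z}$. Unpacking the definition, $\I[y_i] = \I[x_{i+1}]$ and $|y_i| = |x_{i+1}|$ for every $i$, where $|\cdot|$ is computed via the concreteness functor $\Ftr{G} : \cat{C} \to \cat{Set}$. Setting $a_i = |x_i|$, the cyclic condition reads $|y_i| = a_{i+1}$, with indices taken modulo $n$.

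The crucial reduction is to show $|x_i| = |y_i|$ for each $i$. In the motivating examples $\mono{FinSet}$ and $\epi{FinSet}$, the concrete morphisms are injections, respectively surjections, so each $f_i$ forces $a_i \leq a_{i+1}$, respectively $a_i \geq a_{i+1}$; chaining around the loop gives $a_0 \leq a_1 \leq \cdots \leq a_{n-1} \leq a_0$ (or the reverse), so all $a_i$ agree, and therefore $|x_i| = |y_i|$. In a general pigeonhole category, extracting the corresponding monotone behaviour of morphisms on cardinality from concreteness together with the pigeonhole hypothesis is the substance of this step, and in my view the main obstacle of the proof.

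Once $|x_i| = |y_i|$ is established, the pigeonhole condition applies directly to each $f_i$ to yield an isomorphism $x_i \cong y_i$, so $\I[x_i] = \I[y_i] = \I[x_{i+1}]$. Combined with $|x_i| = |y_i| = |x_{i+1}|$, we deduce that $(\I[x_i], |x_i|)$ is constant in $i$, which is exactly the triviality of the virtual $(\I\cup|\cdot|)$-cycle.
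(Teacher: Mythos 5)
Your proposal follows the same outline as the paper's proof, and you have put your finger on exactly the step that the paper itself treats loosely. The paper writes the chain
\[
|x_0|\leq |y_0|=|x_1|\leq |y_1|=|x_2|\leq\cdots\leq|y_{n-1}|=|x_0|
\]
as though the inequality $|x_i|\leq|y_i|$ ``follows'' from the existence of the morphism $f_i:x_i\to y_i$, and then concludes from the resulting equalities that each $f_i$ satisfies the pigeonhole hypothesis. But a concrete category need not send morphisms to injections: the definition of pigeonhole category asks only that a morphism between objects of \emph{equal} cardinality be an isomorphism, and says nothing about how cardinality varies along an arbitrary morphism. So the inequality $|x_i|\leq|y_i|$ (or the reverse) is precisely the unjustified point --- the same one you flag as ``the main obstacle.''

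Your analysis of the examples is also correct: in $\mono{FinSet}$ the underlying functions are injective, so $|x_i|\leq|y_i|$; in $\epi{FinSet}$ they are surjective, so $|x_i|\geq|y_i|$; in either case the cyclic chain forces all cardinalities to coincide, the pigeonhole condition makes each $f_i$ an isomorphism, and then the $\I$-values line up as you describe. What is missing --- in both your write-up and the paper's --- is an argument that an \emph{arbitrary} pigeonhole category enjoys such monotonicity. Nothing in the stated definition rules out a concrete category containing, say, a morphism $A\to B$ with $|A|<|B|$ and a non-composable morphism $B'\to A'$ with $|B'|=|B|>|A'|=|A|$: the pigeonhole condition is vacuous for these, and the two morphisms form a nontrivial virtual $(\I\cup|\cdot|)$-cycle for a suitable $\I$. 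So the result, as literally stated, needs the extra hypothesis that $\cat{C}$ has been restricted to monomorphisms or to epimorphisms (as in Section~\ref{sec:break-cycles}), which is surely what the authors had in mind given the examples. Your identification of the gap is accurate, and your completion of the argument from $|x_i|=|y_i|$ onward matches the paper's.
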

\begin{proof}
    Since $\cat{C}$ is concrete, $\tilde{\I}=\I\cup|\cdot|$ is an isomorphism
    invariant on $\cat{C}$ by Proposition~\ref{prop:new-iso-inv}. Consider the
    virtual $\tilde{\I}$-cycle $(f_0,\ldots,f_{n-1})$, where $f_i:x_i\to y_i$
    for $0\leq i\leq n-1$. Hence $\tilde{\I}(x_0)\leq
    \tilde{\I}(y_0)=\tilde{\I}(x_1)\leq  \cdots \leq
    \tilde{\I}(y_{n-1})=\tilde{\I}(x_0)$. Therefore 
    \begin{align*}
        (\I[x_0],|x_0|)\leq
        (\I[y_0],|y_0|)=(\I[x_1],|x_1|)\leq
        \cdots \leq 
        (\I[y_{n-1}],|y_{n-1}|)=
        (\I[x_{0}],|x_{0}|).
    \end{align*}
    It follows that 
    \[
        |x_0| \leq |y_0|=|x_1|\leq |y_1|=|x_2|\leq \cdots \leq |y_{n-1}|=|x_0|.
    \]
Since $\cat{C}$ is a pigeonhole category, each $f_i$ is an isomorphism, so
    $\I[x_i]=\I[y_j]$ for all $0\leq i,j\leq n-1$. Thus,
    $\tilde{\I}(x_i)=\tilde{\I}(y_j)$, and the virtual cycle is trivial.
\end{proof}

\section{Examples}
\label{sec:examples} 
In the previous section we discussed general phenomena that 
can impede a meaningful attribution of distance to an isomorphism 
invariant. Even in situations where meaningful pseudo-metrics exist,   
there remains the task of identifying explicitly the 
universal partial order whose existence is guaranteed by 
Theorem~\ref{thm:functor}. 
Its feasibility 
depends both on the nature of the category and the subtlety 
of the invariant. Through a series of vignettes of increasing 
complexity, we 
illustrate some approaches to the identification question. 

\subsection{Chromatic number of graphs}
For $\mathcal{G}$ in the category $\cat{Grph}$ of
finite, simple graphs, the chromatic number $\chi(\mathcal{G})$ is the
least number of colors needed to color the vertices of $\mathcal{G}$ such that
no two adjacent vertices have the same color. This is an isomorphism invariant
on $\cat{Grph}$ taking values in $\Z^+=\{1,2,\dots\}$.  

If $\mathcal{H}\hookrightarrow \mathcal{G}$ is an injection of graphs which
induces injections on the vertex and edge sets, then a coloring of $\mathcal{G}$
induces a coloring of $\mathcal{H}$. Hence, $\chi(\mathcal{H})\leq
\chi(\mathcal{G})$ under the successor ordering $\leq$ of $\Z^+$. The
chromatic number of the complete graph $K_n$ on $n$ vertices is $n$.  For each
$1\leq i\leq n$ there is an injection $K_i\hookrightarrow K_n$ obtained from the
first $i$ vertices in $K_n$ and its incident edges.  
Hence, the chromatic number invariant is surjective, and every relation $m\leq
n$ is covered by some monomorphism. By Theorem~\ref{thm:functor},
$(\Z^+,\leq)$ is the universal order for the chromatic number invariant on
$\mono{Grph}$. The corresponding distance between graphs $\mathcal{G}$ and
$\mathcal{H}$ is given by
\begin{align*}
    d_{\chi}(\mathcal{G},\mathcal{H})=|\chi(\mathcal{G})-\chi(\mathcal{H})|.
\end{align*}
The graphs $\mathcal{H}$ and $\mathcal{J}$ in Figure~\ref{fig:chromatic}
both have chromatic number 3, so $d_{\chi}(\mathcal{J},\mathcal{H})=0$.
Also $\chi(\mathcal{K})=4$, so $d_{\chi}(\mathcal{J},\mathcal{K})=1 =
d_{\chi}(\mathcal{H},\mathcal{K})$, which reinforces the perception that the
chromatic number is a coarse invariant.

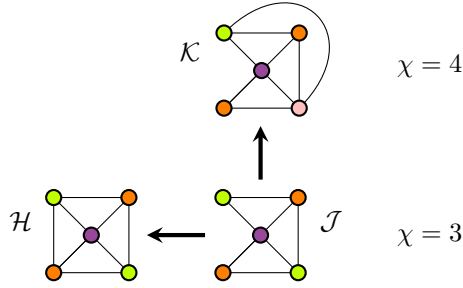
\begin{figure}[h]
    \vspace{-2em}
    \begin{center}
        \begin{tikzpicture}[scale=0.75]
        \node (An) at (-3.25,0.25) {$\mathcal{H}$};
        \node (A) at (-2,0) {\begin{tikzpicture}[scale=0.5]
            \coordinate (a) at ( 0, 0);
            \coordinate (b) at (-1, 1);
            \coordinate (c) at ( 1, 1);
            \coordinate (d) at ( 1,-1);
            \coordinate (e) at (-1,-1);

            \draw (a) -- (b) -- (c) -- (d) -- (e) -- cycle;
            \draw (a) -- (c);
            \draw (a) -- (d);
            \draw (a) -- (e);
            \draw (b) -- (e);

            \draw[thick,fill=Purple] (a) circle (0.2);
            \draw[thick,fill=lime] (b) circle (0.2);
            \draw[thick,fill=orange] (c) circle (0.2);
            \draw[thick,fill=lime] (d) circle (0.2);
            \draw[thick,fill=orange] (e) circle (0.2);
        \end{tikzpicture}};

        \node (Bn) at (-0.25, 3.25) {$\mathcal{K}$};
        \node (B) at (1, 2.1) {};
        \node (Bgraph) at (1.6,3.5) {\begin{tikzpicture}[scale=0.5]
            \coordinate (a) at ( 0, 0);
            \coordinate (b) at (-1, 1);
            \coordinate (c) at ( 1, 1);
            \coordinate (d) at ( 1,-1);
            \coordinate (e) at (-1,-1);

            \draw (a) -- (b) -- (c) -- (d) -- (e) -- cycle;
            \draw (a) -- (c);
            \draw (a) -- (d);
            \draw (a) -- (e);
            \draw (b) edge[out=45, in=45, looseness=2.5] (d);

            \draw[thick,fill=Purple] (a) circle (0.2);
            \draw[thick,fill=lime] (b) circle (0.2);
            \draw[thick,fill=orange] (c) circle (0.2);
            \draw[thick,fill=pink] (d) circle (0.2);
            \draw[thick,fill=orange] (e) circle (0.2);
        \end{tikzpicture}};

        \node (Cn) at (2.25,0.25) {$\mathcal{J}$};
        \node (C) at (1,0) {\begin{tikzpicture}[scale=0.5]
            \coordinate (a) at ( 0, 0);
            \coordinate (b) at (-1, 1);
            \coordinate (c) at ( 1, 1);
            \coordinate (d) at ( 1,-1);
            \coordinate (e) at (-1,-1);

            \draw (a) -- (b) -- (c) -- (d) -- (e) -- cycle;
            \draw (a) -- (c);
            \draw (a) -- (d);
            \draw (a) -- (e);

            \draw[thick,fill=Purple] (a) circle (0.2);
            \draw[thick,fill=lime] (b) circle (0.2);
            \draw[thick,fill=orange] (c) circle (0.2);
            \draw[thick,fill=lime] (d) circle (0.2);
            \draw[thick,fill=orange] (e) circle (0.2);
            
        \end{tikzpicture}};

        \draw[ultra thick,-stealth]  (C)-- (A);
        \draw[ultra thick,-stealth]  (C)-- (B);

        \node (F) at (4,3) {$\chi=4$};
        \node (D) at (4,0) {$\chi=3$};
    \end{tikzpicture}
    \end{center}
    \vspace{-0.75em}
    \caption{Illustration of chromatic distance}\label{fig:chromatic}
\end{figure}

\subsection{Order of a finite group}
The successor order on $\Z^+$ is not the universal
order for every isomorphism invariant into $\Delta=\Z^+$.  Let $\mono{FinGrp}$ be the category whose
objects are finite groups and whose morphisms are group monomorphisms. 
Consider the partial order and pseudo-metric generated by the order of a group, namely the
function $|\cdot|:\,\mono{FinGrp}\ \to \Z^+$ sending $G\mapsto |G|$. As
monomorphisms in this category are injections of sets, it follows
that $H\hookrightarrow G$ induces $|H|\leq |G|$, but this order is not universal.  
The universal partial order is inferred from Lagrange's Theorem:
\[
    m|n \Longleftrightarrow (\exists k)(mk=n).
\]
This is indeed equivalent to the partial order of Theorem~\ref{thm:functor}, and
the induced pseudo-metric on $\mono{FinGrp}$ is given by
\begin{align*}
    d(G,H) & = \sum_{p\text{ prime}}|\nu_p(|G|)-\nu_p(|H|)|,
    &\text{where } n & = \prod_p p^{\nu_p(n)}.
\end{align*}
In particular, each group of order $n$ is closer to each of its Sylow subgroups
than it is to any group of order $n\pm 1$.

\subsection{Abelian invariants} \label{sec:abelian-inv}
Just like the chromatic number of a graph, 
the order of a finite group is a very coarse invariant. 
For example, there are
almost 50 billion non-isomorphic groups of order 1024 \cite{millenium}. If we
restrict attention to finite \emph{abelian} groups, then 
a little more sophistication 
yields 
a \emph{defining} invariant for such groups---one that
settles the isomorphism question.

Let $\Delta$ be the set of \emph{divisor chains} $(d_1,\ldots,d_n)$, where
$d_i\in\N$ and $d_1\mid \cdots \mid d_n$. There is a function $\mathrm{div}$
from the objects of the category $\cat{FinAb}$ of finite abelian groups to
$\Delta$, where $\mathrm{div}\,A= (d_1,\ldots,d_n)$ means that the abelian group
$A\cong \mathbb{Z}/{d_1}\oplus \cdots \oplus \mathbb{Z}/{d_n}$. In this
sense $\mathrm{div}$ is a defining isomorphism invariant for $\cat{FinAb}$.
The terms in $\mathrm{div}\,A$ are the \emph{abelian invariants} of $A$.

Applying Theorem~\ref{thm:functor} to $\epi{FinAb}$ gives the following partial
order on $\Delta$:
\begin{center}
$(d_1,\dots,d_n) \leq  (e_1,\dots,e_m)$ 
if, and only if, $m\leq n$ and  
$e_i\mid d_{n-m+i}$ for all $i$.
\end{center}

Consider $A_1 = \mathbb{Z}/2\oplus\mathbb{Z}/2\oplus
    \mathbb{Z}/4$, and $A_2 = \mathbb{Z}/2\oplus\mathbb{Z}/2\oplus
    \mathbb{Z}/2$, and $A_3 = \mathbb{Z}/8$, so 
    $\mathrm{div}\,A_1=(2,2,4)$, $\mathrm{div}\,A_2=(2,2,2)$, 
    and $\mathrm{div}\,A_3=(8)$. 
The distances between these groups can be seen in 
Figure~\ref{fig:div-chain}, but we briefly examine the reasons.
Evidently, the pairwise distance between the groups is at least 1.
As $A_1$ maps onto $A_2$ with a kernel of order $2$, 
it follows that $(2,2,4)$ is covered by $(2,2,2)$, so 
\(
    d(A_1, A_2) = 1.
\)
There is no surjection from $A_1$ and $A_3$, or 
conversely, so $d(A_1, A_3)
\geq 2$. Thus $A_1$ is closer to $A_2$ than it is 
to $A_3$. In fact, $(2,2,4) < (2,4) < (4)$ and $(8) < (4)$. 
The largest image of $A_3$ in $A_1$ is isomorphic 
to $\mathbb{Z}/4$, so
\(
    d(A_1, A_3) = d(A_1, \mathbb{Z}/4) + 1 = 3.
\) 

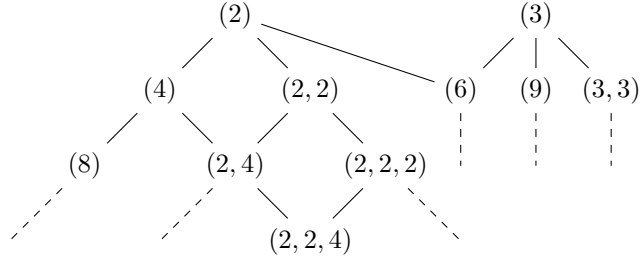
\begin{figure}[htbp]
\begin{center}
    \begin{tikzpicture}
        \node (224) at (1,-3) {$(2,2,4)$};
        \node (222) at (2,-2) {$(2,2,2)$};
        \node (24) at (0,-2) {$(2,4)$};
        \node (8) at (-2,-2) {$(8)$};
        \node (22) at (1,-1) {$(2,2)$};
        \node (4) at (-1,-1) {$(4)$};
        \node (2) at (0,0) {$(2)$};

        \node (33) at (5,-1) {$(3,3)$};
        \node (9) at (4,-1) {$(9)$};
        \node (6) at (3,-1) {$(6)$};
        \node (3) at (4,0) {$(3)$};

        \draw (2) -- (4) -- (8);
        \draw (2) -- (22) -- (222) -- (224);
        \draw (22) -- (24) -- (224);
        \draw (4) -- (24);

        \draw (2) -- (6);
        \draw (3) -- (6);
        \draw (3) -- (9);
        \draw (3) -- (33);

        \draw[dashed] (8) -- (-3,-3);
        \draw[dashed] (24) -- (-1,-3);
        \draw[dashed] (222) -- ++(1,-1);
        \draw[dashed] (6) -- ++(0,-1);
        \draw[dashed] (9) -- ++(0,-1);
        \draw[dashed] (33) -- ++(0,-1);
    \end{tikzpicture}
\end{center}
\caption{The cover graph of divisor-chain partial order}\label{fig:div-chain}
\end{figure}

\subsection{Chromatic polynomials of graphs}
Returning to the category $\cat{Grph}$, we consider a more discerning invariant.
Let $\chi_{\mathcal{G}}(t)$ be the
chromatic polynomial~\cite{Read:chromatic}
of a finite, simple graph $\mathcal{G}$. 
Then $\I :\cat{Grph}_0 \to
\mathbb{Z}[t]$, where $\mathcal{G}\mapsto \chi_{\mathcal{G}}(t)$, is an
isomorphism invariant of $\mono{Grph}$. 

If $\mathcal{G}$ is a graph on $n$ vertices with $m$ edges, then the degree of
$\chi_{\mathcal{G}}(t)$ is $n$ with leading coefficient $1$. Let
$[t^k]\chi_{\mathcal{G}}(t)$ be the coefficient of $t^k$. The coefficients
alternate in sign, and $[t^{n-1}]\chi_{\mathcal{G}}(t) = -m$. Moreover,
$\chi_{\mathcal{G}}(t)$ is the product of chromatic polynomials of each of the
connected components of $\mathcal{G}$. 

We define a partial order $\leq$ on $\Z[t]$ as follows:
\begin{align*} 
    f(t) \leq g(t) \implies \begin{cases} 
        \deg f < \deg g, \text{ or } \\ 
        \deg f = \deg g ~ \text{ and } ~ (\forall k)\left(\left|[t^k]f(t) \right| \leq \left|[t^k]g(t) \right|\right). 
    \end{cases}
\end{align*} 

For an edge $e$ of $\mathcal{G}$, write $\mathcal{G}\setminus e$ for the graph
obtained from $\mathcal{G}$ by deleting the edge $e$, and $\mathcal{G}/e$ for
that obtained from $\mathcal{G}$ by contracting the edge $e$. Recall the
deletion-contraction formula: $\chi_{\mathcal{G}}(t) =
\chi_{\mathcal{G}\setminus e} (t) - \chi_{\mathcal{G}/e} (t)$. We use this
observation to deduce the following.
\begin{prop} \label{prop:chromatic}
    If $\phi : \mathcal{G}_1\hookrightarrow \mathcal{G}_2$, then
    $\chi_{\mathcal{G}_1}(t)\leq \chi_{\mathcal{G}_2}(t)$. 
\end{prop}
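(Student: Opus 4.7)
The plan is to dispatch the statement by cases based on vertex counts. A monomorphism $\phi:\mathcal{G}_1\hookrightarrow\mathcal{G}_2$ in $\cat{Grph}$ is injective on both vertices and edges, so up to relabeling I may treat $\mathcal{G}_1$ as a subgraph of $\mathcal{G}_2$. Since the degree of $\chi_{\mathcal{G}}(t)$ equals $|V(\mathcal{G})|$, the case $|V(\mathcal{G}_1)| < |V(\mathcal{G}_2)|$ is immediate from the first clause in the definition of $\leq$ on $\Z[t]$, irrespective of the coefficients. This leaves the case $|V(\mathcal{G}_1)| = |V(\mathcal{G}_2)| = n$, where $\mathcal{G}_2$ is obtained from $\mathcal{G}_1$ by adjoining finitely many edges on a common vertex set.

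For this equal-vertex case I would induct on the number of edges adjoined. The inductive step reduces to the single-edge claim: if $\mathcal{G}'$ arises from $\mathcal{G}$ by adjoining one edge $e$, then $|[t^k]\chi_{\mathcal{G}}(t)| \leq |[t^k]\chi_{\mathcal{G}'}(t)|$ for every $k$. Since $\mathcal{G}' \setminus e = \mathcal{G}$, the deletion-contraction identity rearranges to
\[
    \chi_{\mathcal{G}'}(t) = \chi_{\mathcal{G}}(t) - \chi_{\mathcal{G}'/e}(t).
\]
Now I invoke the alternating-sign property. Write $\chi_{\mathcal{G}}(t) = \sum_{k}(-1)^{n-k} a_k t^k$, $\chi_{\mathcal{G}'}(t) = \sum_{k}(-1)^{n-k} a'_k t^k$, and $\chi_{\mathcal{G}'/e}(t) = \sum_{k}(-1)^{n-1-k} c_k t^k$, with $a_k,a'_k,c_k\geq 0$ (noting $|V(\mathcal{G}'/e)| = n-1$, which shifts the parity). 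The off-by-one parity flip is exactly what is needed: $-(-1)^{n-1-k} = (-1)^{n-k}$, so matching coefficients of $t^k$ in the displayed identity yields $a'_k = a_k + c_k$, and in particular $a'_k \geq a_k$.

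I do not expect a deep obstacle; the heart of the argument is the sign-matching, and the main things to track are conventions. One small technicality worth verifying is that contracting $e$ in a simple graph may produce parallel edges, but these are irrelevant for proper colorings, so $\mathcal{G}'/e$ may be interpreted as its underlying simple graph without changing $\chi_{\mathcal{G}'/e}(t)$, and the deletion-contraction identity continues to hold. Assembling the single-edge step by induction on the number of adjoined edges and combining with the strict-degree case then gives $\chi_{\mathcal{G}_1}(t) \leq \chi_{\mathcal{G}_2}(t)$.
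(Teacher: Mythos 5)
Your argument is correct and follows the route the paper itself signals: the text immediately preceding the proposition introduces the deletion--contraction identity with the words ``We use this observation to deduce the following,'' and then leaves the proof unwritten. You have supplied exactly the expected derivation: split on whether the embedding changes the vertex count (handled by the degree clause in the definition of $\leq$ on $\Z[t]$), then reduce the equal-vertex case to adding one edge at a time and use deletion--contraction with the alternating-sign property to conclude $a'_k = a_k + c_k \geq a_k$. The sign bookkeeping, including the parity shift from $|V(\mathcal{G}'/e)| = n-1$, is right, and you correctly note that contraction may create parallel edges but that passing to the underlying simple graph leaves the chromatic polynomial and the deletion--contraction identity intact. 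This is the same proof the paper has in mind, just written out.
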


As illustrated in Figure~\ref{fig:chromatic-poly}, 
the converse to Proposition~\ref{prop:chromatic} is not true. 
Thus, the partial order $\leq$
is strictly finer than 
that induced from $\I$ via Theorem~\ref{thm:functor}.
Moreover, we do not know precisely the polynomials that arise as chromatic
polynomials, but we can deduce that $\I$ is not surjective.

\begin{figure}[ht]
\begin{subfigure}[b]{0.4\textwidth}
        \centering 
        \begin{tikzpicture}
            \pgfmathsetmacro{\x}{0.75} 
            \pgfmathsetmacro{\y}{0.75}
            \node() at (-0.85*\x, 0.5*\y) {$\mathcal{G}_1$:};
            \node(1) at (0, 0) [circle, fill=black, inner sep=1.5pt] {}; 
            \node(2) at (\x, 0) [circle, fill=black, inner sep=1.5pt] {}; 
            \node(3) at (0, \y) [circle, fill=black, inner sep=1.5pt] {}; 
            \node(4) at (\x, \y) [circle, fill=black, inner sep=1.5pt] {}; 
            \node at (0.5*\x, -\y) {$\chi_1(t) = t^4-3t^3+2t^2$};
            \draw[-] (1) -- (2);
            \draw[-] (2) -- (3);
            \draw[-] (3) -- (1);
        \end{tikzpicture}
    \end{subfigure}~\begin{subfigure}[b]{0.4\textwidth}
\begin{tikzpicture}
            \pgfmathsetmacro{\x}{0.75} 
            \pgfmathsetmacro{\y}{0.75}
            \node() at (-0.85*\x, 0.5*\y) {$\mathcal{G}_2$:};
            \node(1) at (0, 0) [circle, fill=black, inner sep=1.5pt] {}; 
            \node(2) at (\x, 0) [circle, fill=black, inner sep=1.5pt] {}; 
            \node(3) at (\x, \y) [circle, fill=black, inner sep=1.5pt] {}; 
            \node(4) at (0, \y) [circle, fill=black, inner sep=1.5pt] {}; 
            \node at (0.5*\x, -\y) {$\chi_2(t) = t^4-4t^3+6t^2-3t$};
            \draw[-] (1) -- (2);
            \draw[-] (2) -- (3);
            \draw[-] (3) -- (4);
            \draw[-] (4) -- (1);
        \end{tikzpicture}
    \end{subfigure}
    \caption{$\chi_1(t) \leq \chi_2(t)$ but $\mathcal{G}_1$ does not embed into
    $\mathcal{G}_2$}
    \label{fig:chromatic-poly}
\end{figure}

For a graph $\mathcal{G}$, let $\mathcal{G}'$ be obtained from $\mathcal{G}$ by
adjoining an isolated vertex. Thus, $\chi_{\mathcal{G}'}(t) =
t\cdot\chi_{\mathcal{G}}(t)$, and if $\mathcal{H}$ is a graph such that
$\mathcal{G} \hookrightarrow \mathcal{H} \hookrightarrow \mathcal{G}'$, then
$\mathcal{H}$ is isomorphic to either $\mathcal{G}$ or $\mathcal{G}'$. Hence,
$\chi_{\mathcal{G}'}(t)$ covers $\chi_{\mathcal{G}}(t)$. For graphs
$\mathcal{G}_1$ and $\mathcal{G}_2$ with $n_1$ and $n_2$ vertices respectively,
the pseudo-metric $d$ on $\mono{Grph}$ corresponding to $\leq$ is given by 
\begin{align*} 
    d(\mathcal{G}_1, \mathcal{G}_2) &= \left|n_1 - n_2 \right| + \sum_{k\geq 0} \left|[t^k]\left((-t)^{n_1}\chi_{\mathcal{G}_1}(-t^{-1}) - (-t)^{n_2}\chi_{\mathcal{G}_2}(-t^{-1}) \right)\right|.
\end{align*}

\subsection{Conjugacy class sizes}
Finally, we examine an invariant that played a significant role in recent
classifications of finite $p$-groups.

To distinguish groups up to isomorphism, it is often useful to compare the sizes
of conjugacy classes of elements. For nilpotent groups, we usually convert the
group-theoretic property into one for Lie rings. Thus, the conjugacy classes
sizes of a group transform into the ranks of the linear maps given by the
adjoint representation; see ~\cite[Chapter~6]{Khukhro} and
\citelist{\cite{OBrienVoll} \cite{Rossmann:ask2018}}. These ranks are the
\emph{breadths} of elements of a Lie ring, and they played a decisive role in
classifications of certain families of groups of order dividing $p^8$
~\citelist{\cite{p6}\cite{p7}\cite{p8}}. 

Fix a field $K$. A nonassociative $K$-algebra $A$ is a vector space with a
$K$-bilinear product $[x,y]$.  
Let $A$ be class 2 nilpotent, so
every product of three elements in $A$ is $0$. 
Let $[A,A]$ be the subalgebra of $A$
generated by all $[x,y]$ for $x,y\in A$.
We choose a basis $\{e_1,\ldots,e_n\}$ for $A/[A,A]$, so as to identify
(through univalence) $A/[A,A]$ with $K^n$. Similarly, we identify $[A,A]$ with
$K^m$ after choosing a basis $\{f_1,\ldots,f_m\}$. For each $i,j\in \{1,\dots,
n\}$, there exist $\lambda_{ij}^{(k)}\in K$ such that $[e_i, e_j] =
\lambda_{ij}^{(1)}f_1 + \cdots + \lambda_{ij}^{(m)}f_m$. Let $\bm{x}=(x_1,\dots,
x_n)$ be indeterminates, and define an $m\times n$ matrix $M=M_A(\bm{x}) =
(m_{ij}(\bm{x}))$ of linear forms in $K[x_1,\dots,x_n]$ such that 
\begin{align}\label{eqn:matrix-forms}
    m_{ij}(\bm{x}) &= \lambda_{1j}^{(i)}x_1 + \cdots + \lambda_{nj}^{(i)}x_n.
\end{align} 
For $r\geq 1$, let $\mathcal{P}_r(M)\subset K[x_1,\dots, x_n]$ be the set of
$r\times r$ minors of $M$, and let $\mathcal{I}_r(M)$ be the ideal in
$K[x_1,\dots,x_n]$ generated by $\mathcal{P}_r(M)$. We define the closed
projective subscheme $\mathcal{B}_r(M)$ of $\mathbb{P}_K^{n-1}$ given by
$f(x_1,\dots, x_n)=0$ for all $f\in \mathcal{P}_r(M)$. 

Note that $\mathcal{B}_r(M)$ is not an isomorphism invariant of $A$ since we
must choose bases to describe $M$. If $\Phi\in \GL_n(K)$ and $\Gamma\in
\GL_m(K)$, then $(\Phi,\Gamma)$ acts on $M(\bm{x})$ via
$\Gamma^{\mathrm{tr}}M(\Phi \bm{x}) \Phi$. Thus, the ideal $\mathcal{I}_r(M)$ up
to the action of $\GL_n(K)$, via linear substitutions of the variables, is an
isomorphism invariant of $A$. We can consider geometric features of
$\mathcal{B}_r(M)$ such as dimension or the number of $\mathbb{F}_p$-rational
points. 

Fix a prime $p>2$. Let $\cat{pGroup2}$ be the category 
of $p$-groups $G$ of class 2 and exponent $p$. The Lazard
correspondence yields a functor $\Ftr{L}$ from $\cat{pGroup2}$ to the category
of graded Lie $\mathbb{F}_p$-algebras. Let $\Delta$ be the ideals of
$K[x_1,\dots, x_n]$ up to the action of $\GL_n(K)$. For some $r\geq 1$, we
define an isomorphism invariant $\I$ on $\cat{pGroup2}$ by mapping $G$ to
$\mathcal{I}_r(M_{\Ftr{L}(G)})$. Since ideals are considered up to the
$\GL_n(K)$-action, we write instead $\mathcal{I}_r(G)$.

For example, we mimic the abelian group isomorphism invariant partial order from
Section~\ref{sec:abelian-inv} by using the primary decomposition of polynomial
ideals. Let $\mathfrak{p}$ be a prime ideal of $K[x_1,\dots, x_n]$. 
For a
$\mathfrak{p}$-primary ideal $Q$, define $\nu_{\mathfrak{p}}(Q)=\sup\{\ell \mid
Q\subset \mathfrak{p}^{\ell}\}$. If an ideal $J$ of $K[x_1,\dots, x_n]$
has unique minimal primary decomposition
$J = Q_1\cap \cdots \cap Q_r$, then define 
$\hat{\nu}_{\mathfrak{p}}(J) = \nu_{\mathfrak{p}}(Q_i)$ if
$\mathfrak{p}=\sqrt{Q_i}$ and $0$ otherwise. We define a pseudo-metric $d$ on
$\cat{pGroup2}$ via 
\begin{align}\label{eqn:dist-primes}
    d(G,H) & = \sum_{\mathfrak{p}} \left| \hat{\nu}_{\mathfrak{p}}(\mathcal{I}_r(G))- \hat{\nu}_{\mathfrak{p}}(\mathcal{I}_r(H))\right|,
\end{align}
where the sum runs through all prime ideals $\mathfrak{p}$ of $K[x_1,\dots,
x_n]$. We note that in~\eqref{eqn:dist-primes} the
$\hat{\nu}_{\mathfrak{p}}(\mathcal{I}_r(G))$ is determined for a specific
representative of $\mathcal{I}_r(G)$, but its choice does not affect the
distance.

Consider the following family of groups. Fix a polynomial $f(t)\in
\mathbb{F}_p[t]$ of degree $d$ and define a group of order $p^{3d}$:  
\begin{align*}
    H(f) & = \left\{\begin{bmatrix} 1 & a & c\\ & 1 & b \\ & & 1\end{bmatrix}
            ~\middle|~a,b,c\in \mathbb{F}_p[t]/(f(t))\right\} .
\end{align*}
In \cite{Wilson:SkolemNoether} it is shown that $H(f)\cong H(g)$ if, and only
if, $\mathbb{F}_p[t]/(f)\cong \mathbb{F}_p[t]/(g)$.  We explore the distances
between non-isomorphic groups in this family.  The following can be
confirmed using {\sc Magma} \cite{magma}, so we provide just one sample
calculation.

\begin{ex}
    Assume $p>2$ and $f(t)\in \mathbb{F}_p[t]$ is quadratic. Thus,
    $\mathbb{F}_p[t]/(f(t))$ is isomorphic to one of three cases, based on the
    splitting behavior of $f(t)$. In all three cases, $G = H(f)$ has order $p^6$
    and $\Ftr{L}(G) \cong \mathbb{F}_p^4\oplus \mathbb{F}_p^2$. Thus, the
    matrices $M$ defined via~\eqref{eqn:matrix-forms} are $2\times 4$ matrices
    of linear forms in $\mathbb{F}_p[x_1,x_2,x_3,x_4]$. We construct a
    representative of $\mathcal{I}_2(G)$ for each of the three cases. 
    \begin{description}
        \item[Distinct roots in $\mathbb{F}_p$] Set $f_1 = t^2-1$. For some
        choice of basis, 
        \begin{align*}
            M & = 
            \begin{bmatrix} 
                -x_3 & 0 & x_1 & 0 \\ 
                0 & -x_4 & 0 & x_2
            \end{bmatrix}. 
        \end{align*}
        Let $\mathfrak{p}_1 = (x_1,\ x_3)$ and $\mathfrak{p}_2 = (x_2,\ x_4)$ be
        ideals of $\mathbb{F}_p[x_1,x_2,x_3,x_4]$. Then 
        \begin{align*}
            \mathcal{I}_2(H(f_1)) & = (x_1 x_2,\ x_1 x_4,\ x_2 x_3,\ x_3 x_4) = \mathfrak{p}_1 \cap \mathfrak{p}_2.
        \end{align*}

        \item[Repeated root in $\mathbb{F}_p$] Set $f_2 = t^2$. Then 
        \[
            \mathcal{I}_2(H(f_2))=(x_3^2,\ x_1 x_4-x_2x_3,\ x_1x_3,\ x_1^2)
            \subset \mathfrak{p}_1,
        \]
        so $\mathcal{I}_2(H(f_2))$ is $\mathfrak{p}_1$-primary.

\vspace*{0.1cm}
        \item[No roots in $\mathbb{F}_p$] Set $f_3=t^2-\omega$, for some
        nonsquare $\omega\in\mathbb{F}_p$. Then
        \[ 
            \mathcal{I}_2(H(f_3))=(x_3^2-\omega x_4^2,\ x_1x_4-x_2 x_3,\ x_1x_3-\omega x_2 x_4,\ x_1^2-\omega x_2^2)
        \] 
        is a prime ideal. Write $\mathfrak{p}_3 = \mathcal{I}_2(H(f_3))$.
    \end{description}

    Under the action of $\GL_4(\mathbb{F}_p)$ on the prime ideals of
    $\mathbb{F}_p[x_1,x_2,x_3,x_4]$, both $\mathfrak{p}_1$ and $\mathfrak{p}_2$
    are in the same orbit, and $\mathfrak{p}_3$ is in a different orbit. To see
    this, note that, after a harmless relabelling of variables, the action of
    $\mathrm{GL}_4(\mathbb{F}_p)$ maps $\mathfrak{p}_1$ to $(x_1 + ax_3 + bx_4,\
    x_2 + cx_3 + dx_4)$ for some $a,b,c,d\in \mathbb{F}_p$, and such an ideal 
    never equals $\mathfrak{p}_3$ even after relabelling the variables.

    Let $G_i = H(f_i)$ for $i\in \{1,2,3\}$. Following ~\eqref{eqn:dist-primes},
    we deduce that 
    \[ 
        d(G_1,G_2)=|\hat{\nu}_{\mathfrak{p}_1}(\mathcal{I}_2(G_1))-\hat{\nu}_{\mathfrak{p}_1}(\mathcal{I}_2(G_2))|+|\hat{\nu}_{\mathfrak{p}_2}(\mathcal{I}_2(G_1))-\hat{\nu}_{\mathfrak{p}_2}(\mathcal{I}_2(G_2))|=1.
    \]
    The other distances are computed similarly:
        $d(G_2, G_3) = 2$ and 
        $d(G_1, G_3) = 3$.
\end{ex}

To summarize, the addition of partial ordering and distance to isomorphism
invariants makes it possible not only to distinguish groups, but to organize
them geometrically according to distance.  For example, the groups $G_1$ and
$G_2$ are closer to each other than either is to $G_3$, but the latter is closer
to $G_2$ than to $G_1$. 
 
\section*{Acknowledgements}

We thank Michael Shulman for remarks on Homotopy Type Theory and Anton
Baykalov and Tobias Rossmann
for comments on a draft of the paper. Maglione was supported by DFG grant VO
1248/4-1 (project number 373111162) and DFG-GRK 2297. O'Brien was supported by
the Marsden Fund of New Zealand grant UOA 107. Wilson was supported by a Simons
Foundation Grant, identifier \#636189.

\bibliographystyle{abbrv}
\begin{bibdiv}
\begin{biblist}

\bib{millenium}{article}{
      author={Besche, Hans~Ulrich},
      author={Eick, Bettina},
      author={O'Brien, E.~A.},
       title={A millennium project: constructing small groups},
        date={2002},
        ISSN={0218-1967},
     journal={Internat. J. Algebra Comput.},
      volume={12},
      number={5},
       pages={623\ndash 644},
         url={https://doi.org/10.1142/S0218196702001115},
      review={\MR{1935567}},
}

\bib{magma}{article}{
      author={Bosma, Wieb},
      author={Cannon, John},
      author={Playoust, Catherine},
       title={The {M}agma algebra system. {I}. {T}he user language},
        date={1997},
        ISSN={0747-7171},
     journal={J. Symbolic Comput.},
      volume={24},
      number={3-4},
       pages={235\ndash 265},
         url={http://dx.doi.org/10.1006/jsco.1996.0125},
        note={Computational algebra and number theory (London, 1993)},
      review={\MR{MR1484478}},
}

\bib{BDMOBW}{article}{
      author={Brooksbank, Peter~A.},
      author={Dietrich, Heiko},
      author={Maglione, Joshua~F.},
      author={O'Brien, E.~A.},
      author={Wilson, James~B.},
       title={Computer algebra systems as higher categories: a computational
  model},
        note={In preparation},
}

\bib{BMOBW}{article}{
      author={Brooksbank, Peter~A.},
      author={Maglione, Joshua~F.},
      author={O'Brien, E.~A.},
      author={Wilson, James~B.},
       title={Fully invariant structures in algebra are categorical counits},
        note={In preparation},
}

\bib{BBI:metric-geometry}{book}{
      author={Burago, Dmitri},
      author={Burago, Yuri},
      author={Ivanov, Sergei},
       title={A course in metric geometry},
      series={Graduate Studies in Mathematics},
   publisher={American Mathematical Society, Providence, RI},
        date={2001},
      volume={33},
        ISBN={0-8218-2129-6},
         url={https://doi.org/10.1090/gsm/033},
      review={\MR{1835418}},
}

\bib{Hindley-Seldin}{book}{
      author={Hindley, J.~Roger},
       title={Lambda-{C}alculus and {C}ombinators: {A}n {I}ntroduction},
   publisher={Cambridge University Press},
     address={Cambridge},
        date={2008},
}

\bib{Jacobson:AlgebraII}{book}{
      author={Jacobson, Nathan},
       title={Basic {A}lgebra. {II}},
     edition={Second edition},
   publisher={W. H. Freeman and Company, New York},
        date={1989},
        ISBN={0-7167-1933-9},
      review={\MR{1009787}},
}

\bib{Khukhro}{book}{
      author={Khukhro, E.~I.},
       title={{$p$}-{A}utomorphisms of {F}inite {$p$}-{G}roups},
      series={London Mathematical Society Lecture Note Series},
   publisher={Cambridge University Press, Cambridge},
        date={1998},
      volume={246},
        ISBN={0-521-59717-X},
         url={https://doi.org/10.1017/CBO9780511526008},
      review={\MR{1615819}},
}

\bib{Monjardet:metric-posets}{article}{
      author={Monjardet, B.},
       title={Metrics on partially ordered sets---a survey},
        date={1981},
        ISSN={0012-365X},
     journal={Discrete Math.},
      volume={35},
       pages={173\ndash 184},
         url={https://doi.org/10.1016/0012-365X(81)90206-5},
      review={\MR{620670}},
}

\bib{p6}{article}{
      author={Newman, M.~F.},
      author={O'Brien, E.~A.},
      author={Vaughan-Lee, M.~R.},
       title={Groups and nilpotent {L}ie rings whose order is the sixth power
  of a prime},
        date={2004},
        ISSN={0021-8693},
     journal={J. Algebra},
      volume={278},
      number={1},
       pages={383\ndash 401},
         url={https://doi.org/10.1016/j.jalgebra.2003.11.012},
      review={\MR{2068084}},
}

\bib{p7}{article}{
      author={O'Brien, E.~A.},
      author={Vaughan-Lee, M.~R.},
       title={The groups with order {$p^7$} for odd prime {$p$}},
        date={2005},
        ISSN={0021-8693},
     journal={J. Algebra},
      volume={292},
      number={1},
       pages={243\ndash 258},
         url={https://doi.org/10.1016/j.jalgebra.2005.01.019},
      review={\MR{2166803}},
}

\bib{OBrienVoll}{article}{
      author={O'Brien, E.~A.},
      author={Voll, C.},
       title={Enumerating classes and characters of {$p$}-groups},
        date={2015},
        ISSN={0002-9947},
     journal={Trans. Amer. Math. Soc.},
      volume={367},
      number={11},
       pages={7775\ndash 7796},
         url={https://doi.org/10.1090/tran/6276},
      review={\MR{3391899}},
}

\bib{Read:chromatic}{article}{
      author={Read, Ronald~C.},
       title={An introduction to chromatic polynomials},
        date={1968},
        ISSN={0021-9800},
     journal={J. Combinatorial Theory},
      volume={4},
       pages={52\ndash 71},
      review={\MR{224505}},
}

\bib{Rossmann:ask2018}{article}{
      author={Rossmann, Tobias},
       title={The average size of the kernel of a matrix and orbits of linear
  groups},
        date={2018},
        ISSN={0024-6115},
     journal={Proc. Lond. Math. Soc. (3)},
      volume={117},
      number={3},
       pages={574\ndash 616},
         url={https://doi.org/10.1112/plms.12159},
      review={\MR{3857694}},
}

\bib{HoTT}{book}{
      author={{Univalent Foundations Program}, The},
       title={Homotopy {T}ype {T}heory: {U}nivalent {F}oundations of
  {M}athematics},
   publisher={https://homotopytypetheory.org/book},
     address={Institute for Advanced Study},
        date={2013},
}

\bib{p8}{article}{
      author={Vaughan-Lee, Michael},
       title={Groups of order {$p^8$} and exponent {$p$}},
        date={2015},
        ISSN={2251-7650},
     journal={Int. J. Group Theory},
      volume={4},
      number={4},
       pages={25\ndash 42},
      review={\MR{3416635}},
}

\bib{Wilson:SkolemNoether}{article}{
      author={Wilson, James~B.},
       title={On automorphisms of groups, rings, and algebras},
        date={2017},
        ISSN={0092-7872},
     journal={Comm. Algebra},
      volume={45},
      number={4},
       pages={1452\ndash 1478},
         url={https://doi.org/10.1080/00927872.2016.1175617},
      review={\MR{3576669}},
}

\end{biblist}
\end{bibdiv}

\end{document}